\pdfoutput=1
\documentclass[
	,12pt%
	,pagesize%
	,headings=small%
	,paper=letterpaper%
	,parskip=false%
	,abstract=true
	,toc=bibliography
	,DIV=12%
]{scrartcl}

\RedeclareSectionCommand[
  beforeskip=1.5ex plus 0.3ex minus 0.1ex,
  afterskip=0.8ex plus 0.1ex
]{section}
\RedeclareSectionCommand[
  beforeskip=1ex plus 0.2ex minus 0.1ex,
  afterskip=0.5ex plus 0.1ex
]{subsection}

\setkomafont{title}{\normalfont}
\setkomafont{subtitle}{\normalfont}
\setkomafont{author}{\normalfont}
\setkomafont{date}{\normalfont}
\addtokomafont{pageheadfoot}{\scshape\small}
\setkomafont{caption}{\footnotesize}
\setkomafont{captionlabel}{\usekomafont{caption}\bfseries}
\setcapindent{0pt}

\usepackage{authblk}

\usepackage[T1]{fontenc}
\usepackage[margin=1in]{geometry}
\usepackage{amsmath}
\usepackage{amssymb}
\usepackage{amsthm}
\usepackage{mathtools}
\usepackage{xspace}
\usepackage[normalem]{ulem} %
\usepackage[dvipsnames]{xcolor}

\usepackage{newtxtext}
\usepackage[varvw]{newtxmath} %

\usepackage{graphicx}
\usepackage{booktabs}
\usepackage[percent]{overpic} 

\usepackage{tikz-cd}
\usepackage{quiver}

\usepackage{textcomp}

\usepackage{enumitem}
\setlist{nosep}

\def\figdir{Pictures/}
\graphicspath{\figdir}

\usepackage{aliascnt} %
\newcommand{\newautotheorem}[3] %
{
\newaliascnt{#1}{#2}
\newtheorem{#1}[#1]{#3}
\aliascntresetthe{#1}
\expandafter\def\csname #1autorefname\endcsname{%
#3%
}%
}

\newautotheorem{lemma}{theorem}{Lemma}
\newautotheorem{proposition}{theorem}{Proposition}
\newautotheorem{corollary}{theorem}{Corollary}

\theoremstyle{definition}
\newautotheorem{definition}{theorem}{Definition}
\newtheorem*{example}{Example}
\newautotheorem{conjecture}{theorem}{Conjecture}
\newautotheorem{remark}{theorem}{Remark}
\newautotheorem{claim}{theorem}{Claim}

\newcommand{\CA}{C\textunderscore{}A}
\newcommand{\AC}{A\textunderscore{}C}
\newcommand{\AleftA}{A\textunderscore{}left\textunderscore{}A}

\newcommand{\Left}{{\mathrm{left}}}
\newcommand{\Right}{{\mathrm{right}}}
\newcommand{\Reverse}{{\mathrm{reverse}}}

\newcommand{\ReAPR}{\texttt{ReAPR}\xspace}

\newcommand{\reapr}{\ReAPR}
\newcommand{\Reapr}{\ReAPR}

\newcommand{\SnapPy}{\texttt{SnapPy}\xspace}

\newcommand{\Snappy}{\SnapPy}

\newcommand{\Regina}{\texttt{Regina}\xspace}

\newcommand{\Knoodle}{\texttt{Knoodle}\xspace}

\newcommand{\R}{\mathbb{R}}

\newcommand{\head}{\operatorname{head}}
\newcommand{\tail}{\operatorname{tail}}

\newcommand{\ceq}{\coloneqq}

\let\mgp=\marginpar \marginparwidth18mm \marginparsep1mm
\def\marginpar#1{\mgp{\raggedright\tiny #1}}

\let\lbl=\label
\def\label#1{\lbl{#1}\ifinner\else\marginpar{\ref{#1} #1}\ignorespaces\fi}

\DeclareMathOperator*{\TV}{TV}

\DeclarePairedDelimiterXPP{\pars}[1]{\mathop{}}{\lparen}{\rparen}{}{#1}

\DeclarePairedDelimiterXPP{\abs}[1]{\mathop{}}{\lvert}{\rvert}{}{#1}
\DeclarePairedDelimiterXPP{\norm}[1]{\mathop{}}{\lVert}{\rVert}{}{#1}
\DeclarePairedDelimiterXPP{\seminorm}[1]{\mathop{}}{\lbrack}{\rbrack}{}{#1}
\DeclarePairedDelimiterXPP{\inner}[1]{\mathop{}}{\langle}{\rangle}{}{#1}
\DeclarePairedDelimiterXPP{\brackets}[1]{\mathop{}}{\lbrack}{\rbrack}{}{#1}
\DeclarePairedDelimiterXPP{\braces}[1]{\mathop{}}{\lbrace}{\rbrace}{}{#1}

\DeclarePairedDelimiterXPP{\intervalcc}[1]{\mathop{}}{\lbrace}{\rbrace}{}{#1}
\DeclarePairedDelimiterXPP{\intervalco}[1]{\mathop{}}{\lbrace}{\rparen}{}{#1}
\DeclarePairedDelimiterXPP{\intervaloc}[1]{\mathop{}}{\lparen}{\rbrace}{}{#1}
\DeclarePairedDelimiterXPP{\intervaloo}[1]{\mathop{}}{\lparen}{\rparen}{}{#1}

\DeclarePairedDelimiterXPP{\myset}[2]{\mathop{}}{\lbrace}{\rbrace}{}{#1\,\delimsize\vert\,\mathopen{}#2}
\let\dot\undefined

\DeclarePairedDelimiterXPP{\dot}[2]{\mathop{}}{\langle}{\rangle}{}{#1,#2}

\DeclarePairedDelimiterXPP{\floor}[1]{\mathop{}}{\lfloor}{\rfloor}{}{#1}
\DeclarePairedDelimiterXPP{\ceil}[1]{\mathop{}}{\lceil}{\rceil}{}{#1}

\DeclareDocumentCommand{\converges}{ o }{
	\mathbin{%
		\IfValueTF{#1}{%
			\mathrel{\vbox{\offinterlineskip\ialign{%
				\hfil##\hfil\cr
				$\scriptscriptstyle#1$\cr
				$-\!\!\!-\!\!\!\rightarrow$\cr
			}}}
		}{%
			-\!\!\!-\!\!\!\rightarrow
		}%
	}%
}

\newcommand{\qtext}[1]{\quad\text{#1}\quad}
\newcommand{\qand}{\qtext{and}}

\usepackage[numbers,sort&compress]{natbib}

\usepackage[
	backref  = false%
	,pagebackref = false%
	,bookmarks  = true%
	,bookmarksdepth=3%
]{hyperref}
\usepackage[all]{hypcap} 
\usepackage[capitalise,nameinlink]{cleveref}
\usepackage{enumitem}
\crefname{equation}{}{}

\newcommand{\subjclass}[2][2020]{%
  \par\noindent\emph{#1 Mathematics Subject Classification.} #2\par}
\newcommand{\keywords}[1]{%
  \par\noindent\emph{Key words and phrases.} #1\par}

\begin{document}
\title{Hard unknots are often easy from a different perspective}
\author[1]{Jason Cantarella}
\affil[1]{Mathematics Department, University of Georgia, Athens, GA, USA}
\author[2]{Henrik Schumacher}
\affil[2]{Institute for Mathematics, RWTH Aachen University, Aachen, Germany}
\author[3]{Clayton Shonkwiler}
\affil[3]{Department of Mathematics, Colorado State University, Fort Collins, CO, USA}

\date{\today}
\maketitle

\begin{abstract}
Recent attempts to train AI models to recognize knots have produced millions of ``hard'' unknot diagrams resistant to simplification by Reidemeister moves, pass moves, or random walks on the Reidemeister graph. Most are easy for non-diagrammatic methods such as simplifying triangulations of the knot complement (\Regina) or presentations of the knot group (\Snappy). We present \Reapr (Re-embedding And Pass Rerouting), which alternates pass-move reduction with a geometric re-embedding step. The re-embedding minimizes the \emph{total variation} of a height function on the diagram subject to crossing constraints. We show that for an $n$-crossing diagram, the minimum total variation is $2(n-k)$, where~$k$ is the least number of crossings one must virtualize to make the diagram virtually alternating; this is a combinatorial invariant of the diagram. Reprojecting the resulting embedding from a new viewpoint reveals previously hidden simplifications. \Reapr successfully simplifies every published hard-unknot example we are aware of, as well as several new collections ($\approx 2.6$ million examples in total) in under 30 seconds of total CPU time. This includes a set of Kauffman's ``challenge'' unknots, presented as rational tangles, which appear to be surprisingly difficult for both non-diagrammatic methods.
\end{abstract}

{\footnotesize
\begin{quotation}
\subjclass{(57K10; 57-08, 57K12, 05C21, 90C35)}
\keywords{unknot recognition, hard unknots, knot diagram simplification, pass move, total variation, minimum cost flow, virtual knots.}
\end{quotation}}

\section{Introduction}

Given a knot diagram, an interesting computational problem is to transform it to a diagram of the same knot with as few crossings as possible. Deciding whether such a simplification exists, and finding one if so, are problems that go back to Turing~\cite{turingSolvableUnsolvableProblems2004}. This problem has recently attracted a lot of new interest as attempts are made to develop AI methods to recognize and classify knots~\cite{applebaumUnknottingNumberHard2025,GukovLearning,KauffmanDeepKnot,zhangGeneratingKnottedPolymer2026,wangIntegratingGraphConvolutional2024,VandansIKT,lisitsaSupervisedLearningUntangling2023,lindsayLearnabilityKnotInvariants2025,lahoudsleimanGeometricLearningKnot2024,khanReinforcementLearningAlgorithms2022,castelvecchiDeepMindsAIHelps2021,braghettoMachineLearningUnderstands2023}. It is desirable to have a method for this simplification step which is extremely effective, extremely fast, and scalable to very large diagrams. We present \ReAPR (Re-embedding And Pass Rerouting), which is the first simplifier to reach all three goals.

To establish some notation, we think of an $n$-crossing knot diagram $D$ as a directed, 4-valent, embedded planar graph $G(D)$ with over and under information at the crossings. The edges of this graph are~\emph{arcs} of the knot $a_0, \dots, a_{2n-1}$; we assume that they are numbered in the order in which we visit them as we follow the knot. A (maximal)~\emph{overpass} is a consecutive set of arcs $a_1, \dots, a_{k}$ where the head of each $a_i$ with $i < k$ goes over at the corresponding crossing while the tail of $a_1$ and the head of $a_k$ go under. A maximal~\emph{underpass} has the same definition, swapping ``over'' and ``under''. 

The first phase of our algorithm finds overpasses and underpasses and reroutes them to have fewer crossings. Such reroutings are called \emph{pass moves}. The second phase constructs a cubic lattice embedding of the knot from the diagram by first using graph-drawing methods to construct a planar grid embedding of the underlying 4-valent embedded planar graph associated to the diagram and then computing a piecewise-constant height function that respects over/under information. The heights are chosen to minimize the \emph{total variation} $\TV(f)$---the sum of absolute height changes between consecutive arcs---subject to the constraint that overarcs are higher than underarcs at each crossing. The total variation has a nice geometric interpretation: it is the integral (over $z$) of the number of times the curve crosses the horizontal plane at height $z$. This means that minimizing TV produces the ``vertically simplest'' presentation of the knot. This constrained optimization problem can be reformulated as a minimum cost flow problem on a directed graph derived from the diagram and solved in near-linear time in the number of crossings.

Surprisingly, the minimum total variation of a diagram turns out to have an attractive topological interpretation: for an $n$-crossing diagram $D$, the minimum possible value of $\TV$ is $2(n-k)$, where $k$ is the smallest number of crossings one must virtualize to make $D$ virtually alternating (\cref{prop:virtualization}). In particular, this energy is a combinatorial invariant of the diagram. The difference from $2n$ is a measure of how far the diagram is from being alternating; for nonalternating diagrams, the optimal height function separates independent regions of the diagram from each other, often putting them on distinct levels. Reprojecting this lattice embedding from a different viewpoint produces a new diagram, often revealing new pass-move simplifications invisible in the original projection.

It is particularly interesting to try to simplify diagrams of the unknot, as a simplification algorithm which was guaranteed to produce a minimal (i.e., 0) crossing diagram would be a solution to the unknot recognition problem~(see~\cite{lackenbyElementaryKnotTheory2017} for a discussion of the problem). Any such algorithm using Reidemeister moves must sometimes involve adding crossings~\cite{BurtonHardUnknots} and any such algorithm using pass moves must sometimes involve pass moves which do not decrease the number of crossings~\cite{yamadaHowFindKnots2000,TuzunSikoraJones}, but we do not know if there is always a weakly decreasing sequence of pass moves. By contrast, it is known that for grid diagrams there is a simplifying sequence which is weakly monotonic in a certain measure of diagram complexity~\cite{DynnikovAP}.

Many authors have compiled tables or databases of complicated diagrams of the unknot~\cite{Ochiai, applebaumUnknottingNumberHard2025, TuzunSikoraJones}. Some of these are extremely challenging for existing codes to untangle. Such diagrams are colloquially referred to as ``hard'' unknots. We provide a highly performant implementation in \texttt{C++}~\cite{Knoodle} which successfully simplifies all published examples. We also simplify some large collections of new examples. See~\cref{sec:effectiveness} for our performance results and a detailed description of our test set, which is publicly available~\cite{testsets}. While we do not claim that \ReAPR solves the unknot recognition problem, we note that it succeeds on every published hard-unknot example we are aware of; we would welcome examples on which it fails.

\section{Pass Moves and Pass Reduction}
\label{sec:pass moves}

The first step in \Reapr is to find and perform reducing pass moves on a given knot diagram until we reach a diagram where no more reducing pass moves exist. We do this with a greedy algorithm, so there is no guarantee that the pass-reduced diagram where we terminate is the best one for the knot. In this section, we describe our algorithm at a level sufficient for conceptual understanding. Those interested in the implementation may also want to consult~\cref{sec:rerouting details}, which gives more details on the algorithm. 

We provide a \texttt{C++} implementation~\cite{Knoodle}, noting that the performance results in~\cref{sec:effectiveness} required very careful attention to memory layout and cache efficiency, which we also describe in~\cref{sec:rerouting details}.

Suppose we find an over- or underpass $a_1, \dots, a_k$ in a diagram $D$. We may imagine deleting it from the diagram, leaving a modified diagram $\hat{D}$ with two distinguished faces $s$ and $t$ consisting of the union of faces to the left and right of $a_1$ and $a_k$, respectively. In $\hat{D}$, the tail crossing of $a_1$ and the head crossing of $a_k$ degenerate to two $T$-junctions. We could then reroute the pass by reconnecting these $T$-junctions along an arc through $\ell$ adjacent faces of $\hat{D}$, as in~\cref{fig:pass move}. Each face boundary crossed produces a new crossing, so if $\ell < k-1$ the rerouting yields an isotopic diagram with fewer crossings. We call this a~\emph{reducing} pass move, and the graph of diagrams connected by pass moves the~\emph{pass graph}. The first part of our algorithm searches for reducing pass moves 
by finding shortest paths from $s$ to $t$ in the dual graph $\hat{D}^*$ (whose vertices are faces of $\hat{D}$ and whose edges connect adjacent faces), performing them in whatever order we find them.

\begin{figure}[t]
\centering
\hfill
\includegraphics[width=0.3\textwidth]{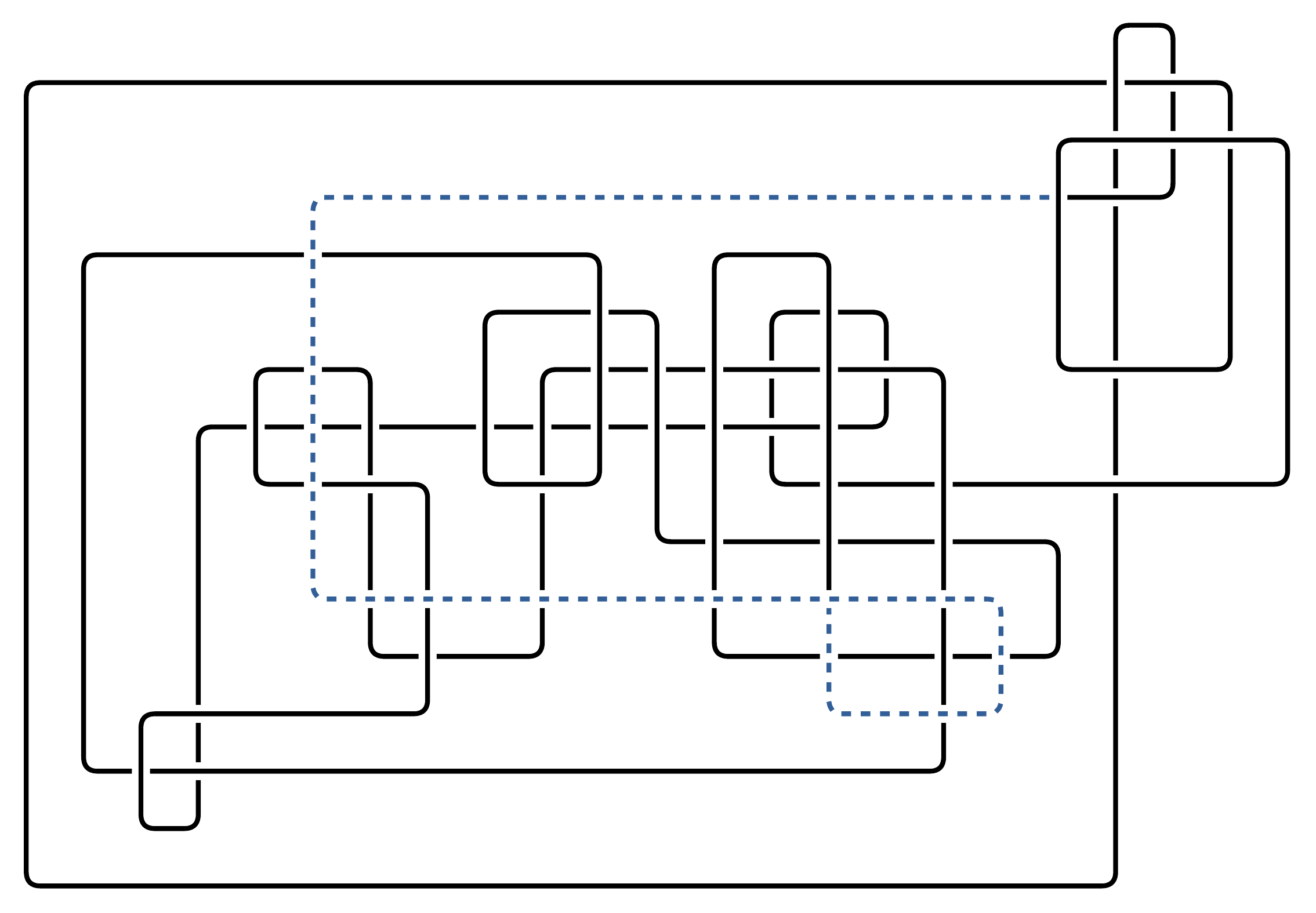}
\hfill
\begin{overpic}[width=0.3\textwidth]{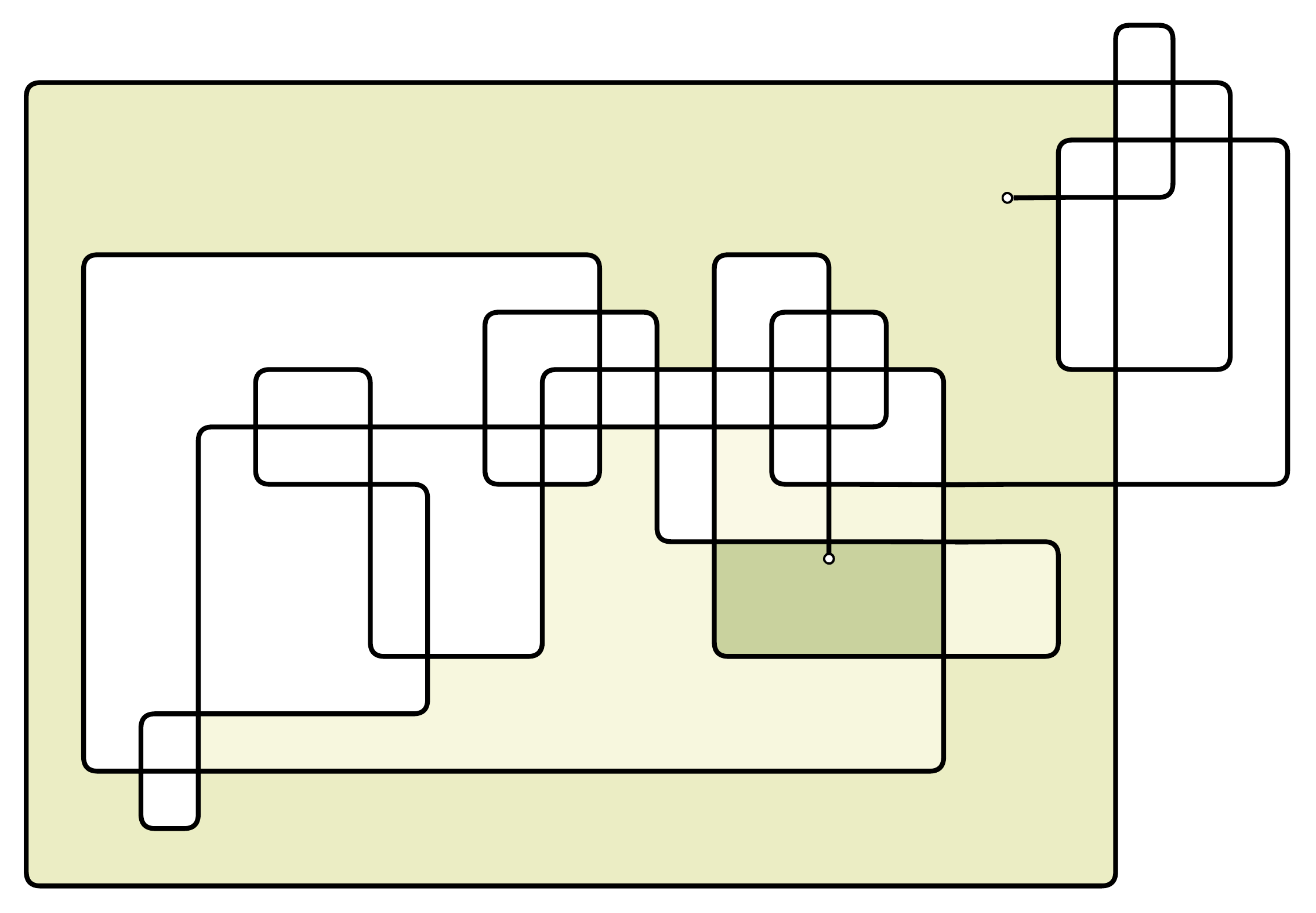}
\put(62,22){{\tiny 1}}
\put(71,53){{\tiny 2}}
\put(50,13){{\tiny 3}}
\put(56,29){{\tiny 4}}
\put(66,29){{\tiny 5}}
\put(74.5,22){{\tiny 6}}
\end{overpic}
\hfill
\includegraphics[width=0.3\textwidth]{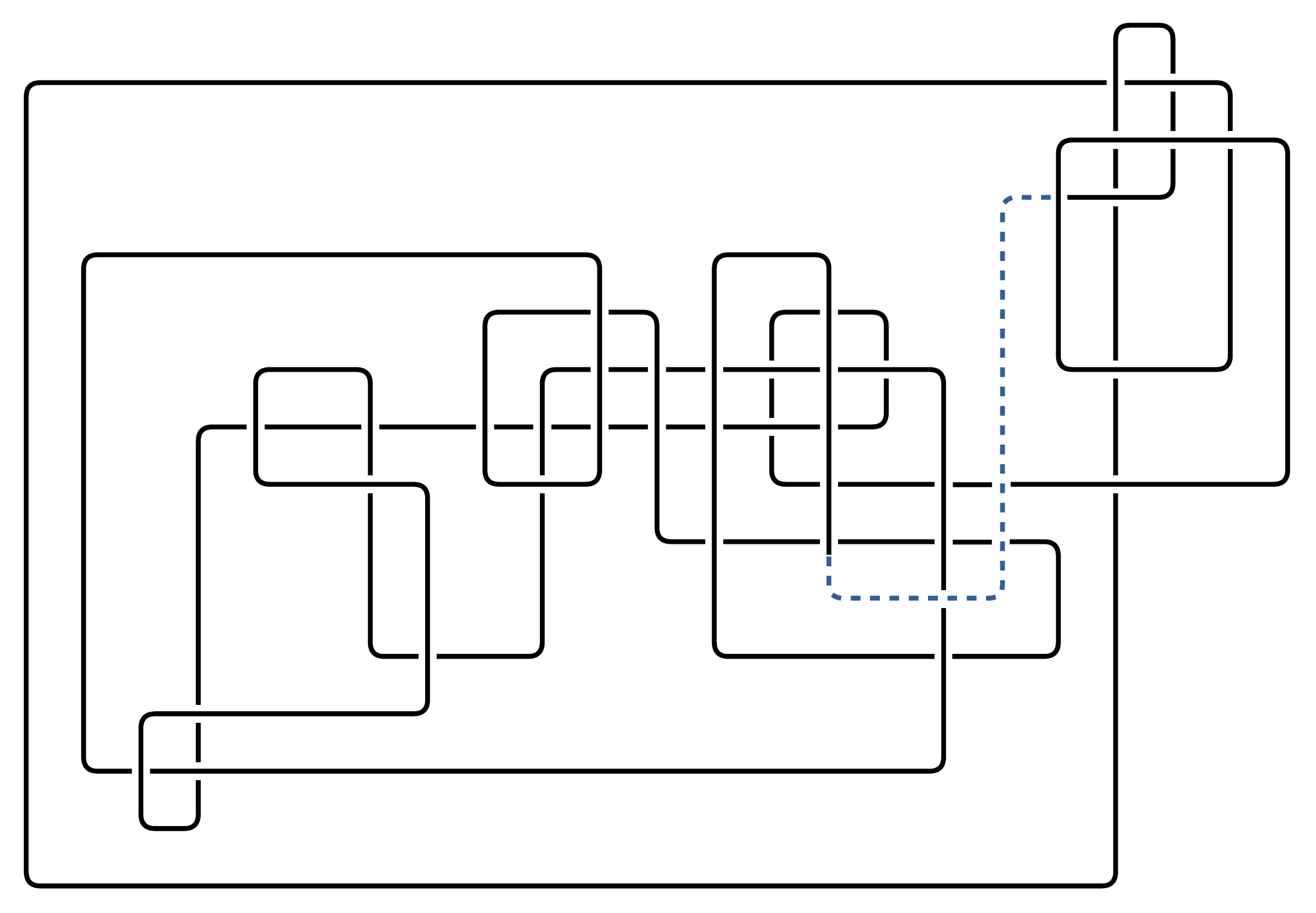}
\hfill
\caption{A simplifying pass move that is easy to discover in the pass graph, but hard to find by Reidemeister moves. The dotted overpass at left has 12 crossings. Bidirectional BFS in the dual graph visits the faces in the labeled order, finding the 3-crossing rerouting at right after visiting 6 faces (on the 7th step, we discover previously visited face 6 while exploring the frontier of face 2). Finding this rerouting by Reidemeister moves would require increasing the crossing count and conducting a large-radius search, as significant portions of the diagram separate the initial and final positions of the arcs.
\label{fig:pass move}}
\end{figure}

We look for these shortest paths by searching $\hat{D}^*$ using bidirectional breadth-first search (BFS). A natural approach to computing shortest paths in $\hat{D}^*$ (used, for example, by \Snappy{}'s \texttt{Spherogram}) is to explicitly construct $\hat{D}^*$ in $O(n)$ time and search it with a standard graph library. However, the shortest path from $s$ to $t$ has length at most $k-1$, so if $n \gg k$ this constructs far more of the dual graph than we will ever visit. We avoid this by navigating $\hat{D}^*$ implicitly using certain operations on directed arcs of $D$. In what follows, we refer to directed arcs as \emph{darcs}.

Given a darc $da$ in the original diagram $D$, we define operators $\Reverse(\cdot)$ and $\Left(\cdot)$ as follows. First, reversing the direction of $da$ gives $\Reverse(da)$. Next, if $f$ is the face lying to the left of $da$, the \emph{next left} darc $\Left(da)$ is the next darc of $f$ when cycling around the boundary of $f$ in counter-clockwise order. 

Note that faces of $D$ (or vertices of $D^*$) can be identified with the orbits of the operator $\Left(\cdot)$, so we don't need to label them explicitly. Also, given a darc $da$ on the boundary of a face $f$, the darc $\Reverse(da)$ has to its left some face adjacent to~$f$. So by cycling through the darcs of~$f$ and reversing each of them, we can visit neighbor faces of~$f$. Since this visit of neighbors is the only kind of operation required for breadth-first search, we can search for shortest paths in $D^*$ entirely using these two operations on darcs of $D$.

Of course, the goal is to search for shortest paths in $\hat{D}^*$, not in $D^*$. To do so, recall that $\hat{D}$ is formed from $D$ by deleting arcs. So to navigate in $\hat{D}^*$ it suffices to modify the $\Left(\cdot)$ operator by skipping over deleted arcs. This allows us to navigate $\hat{D}^*$ using only operations on darcs of $D$, without ever constructing $\hat{D}$ or $\hat{D}^*$ as separate data structures. 

We now have all the tools needed for bidirectional BFS on $\hat{D}^*$. We perform this search using a \emph{smallest-frontier} heuristic which we now explain. A \emph{frontier} is a set of darcs pointing outward from the set of visited faces: $da$ is in the frontier if the face to the left of $\Reverse(da)$ has been visited but the face to the left of $da$ has not. The initial frontiers $F(s)$ and $F(t)$ consist of the reverses of the darcs in faces $s$ and $t$. To expand a frontier, say $F(s)$: for each $da \in F(s)$, we use $\Left(\cdot)$ to iterate around the face containing $da$, collecting all darcs encountered into a working set $W$. If any darc in $W$ is in $F(t)$, the two searches have met and we reconstruct a shortest path by backtracking. Otherwise, the expanded frontier is a subset of $\{\Reverse(de) : de \in W\}$. 

On a regular $d$-dimensional grid, strictly alternating bidirectional BFS visits a factor of $2^{1-d}$ fewer vertices than unidirectional search. However, the dual graph of a knot diagram is highly irregular: most faces are small, but a few (such as the exterior face of a polygon with many edges) can have tens of thousands of boundary arcs. Strictly alternating between the two frontiers risks expanding an enormous face early on one side, cascading into further large faces. We instead always expand the frontier with fewer darcs, a greedy strategy that avoids this blowup. In our benchmarks, bidirectional BFS with this smallest-frontier heuristic gave roughly a $20\times$ improvement over unidirectional BFS on large diagrams.

\subsection{Pass Moves vs. Reidemeister Moves} 
\label{sub:pass vs R}

Why bother with pass moves at all? After all, both \Snappy and \Regina have algorithms which navigate the Reidemeister graph either by breadth-first search or by random walk until a diagram with fewer crossings is found. Since every pass move is equivalent to a sequence of Reidemeister moves and every Reidemeister move is a pass move, the two graphs have the same connected components; in fact the Reidemeister graph is a subgraph of the pass graph. Therefore, these algorithms are equally powerful; either random walk or breadth-first search will eventually discover any reducing pass move. However, the geometry of the pass graph makes pass reduction much faster. 

Suppose that $D$ and $D'$ are diagrams related by a reducing pass move which takes an overpass of length $k$ to an overpass of length $k'$. It could take $O(n)$ time to discover the overpass, and then we might have to search the dual graph to radius $k' < k$ to discover the reducing edge in the pass graph. The dual graph is planar, so the number of faces examined is usually quite small.\footnote{However, there are link diagrams whose dual graphs contain $\sim k^{\alpha}$ faces in a ball of radius $k$ for any fixed $\alpha > 1$ so it seems hard to bound this step usefully in terms of $k$. For examples, see Section 3.3 of~\cite{benjaminiRecurrenceDistributionalLimits2001}, which constructs plane triangulations with this property. Subdividing each triangle into 3 quadrangles yields planar quadrangulations with the same property; every planar quadrangulation is the dual graph of a link diagram.} 
At worst it is $O(n)$, as there are only $n+2$ faces total. Once the edge is discovered, we can traverse it in $O(k)$ time, transforming $D$ to $D'$. At worst, it will take $O(n^2)$ time to rule out the existence of any reducing pass move. 

In the Reidemeister graph, discovering edges takes $O(n)$ time and traversing them is $O(1)$. This is much faster. However, to find $D'$ by searching the Reidemeister graph, the search~\emph{radius} could be $O(n)$, as we might have to gradually push the overpass over an arbitrarily complex intermediate portion of the diagram as in~\cref{fig:pass move} before we discover the rerouting on the other side. This could require us to generate and examine exponentially many diagrams. This is essentially why pass moves tend to be more efficient. 

\section{Re-embedding}
\label{sec:re-embedding}

Once pass reduction is finished and we have arrived at a diagram $D$ with no reducing pass moves left, we turn to the re-embedding phase. 

\subsection{The Horizontal Component of Re-embedding}
\label{sub:horizontal}

The first step is to construct an orthogonal layout for the embedded planar graph $G(D)$ using a new implementation of Tamassia and Tollis' algorithm~\cite{TamassiaOriginal} 
with the \emph{turn-regularization} and \emph{face saturation} procedures from \cite{zbMATH01455637} 
and with some internal modifications to improve performance and generate more attractive diagrams. 

Finding an embedding of $G(D)$ can be done in linear time \cite{tamassiaPlanarGridEmbedding1989}, but we invest somewhat more time in computing an embedding with a minimal number of bends, which requires solving several linear programming problems during layout. All of these are network flow problems on planar graphs~\cite{TamassiaOriginal}. 
Since our demands and costs are integral and bounded, these network flow problems may be solved in almost linear-time in the number of crossings ($O(n^{1+o(1)})$)~\cite{ChenMaxFlowJACM}. 
In~practice, we use the network simplex algorithm implemented in the class \texttt{MCFSimplex} from the library \texttt{Min-Cost-Flow-Class} project~\cite{MCFClass}, which provides excellent performance and a clean \texttt{C++} interface. 
\texttt{SnapPy}'s \texttt{Spherogram} module and \texttt{Sage} provide Python implementations of comparable graph-drawing methods.

\subsection{The Vertical Component of Re-embedding}
\label{sub:vertical}

Once we have the horizontal layout of $G(D)$, the next goal is to find a height function on over/undercrossings in the knot diagram $D$ which satisfies the constraints imposed by the crossing information and which minimizes the total distance traveled in the $z$-direction while traversing the re-embedded knot. 

Since the height function assigns values to over/undercrossings, let $v_0, \dots, v_{2n-1}$ be the consecutive over/undercrossings as we traverse the knot diagram. If we define a height function $f(v_i)$ so that $a_i$ is at $z$-height $f(v_i)$ when it leaves the tail crossing, then the over/undercrossing information would require $f(v_j) - f(v_i) \geq 1$ whenever $a_j$ passes over $a_i$ at a crossing. Since the arc $a_i$ may need to change height before it arrives at its head crossing, the~\emph{jump} $|f(v_i) - f(v_{i+1})|$ is generally not zero. We can now formally define the total variation, which is the objective function we want $f$ to minimize:

\begin{definition} The \emph{total variation} $\TV(f)$ is given by the total height of the jumps
\[
\TV(f) = |f(v_1) - f(v_0)| + |f(v_2) - f(v_1)| + \cdots + |f(v_0) - f(v_{2n-1})|.
\]
\end{definition}

Therefore, we want to minimize $\TV(f)$ subject to the constraint that $f(v_j) - f(v_i) \geq 1$ whenever $a_j$ crosses over $a_i$. To efficiently find a solution, we will write this as a minimum cost tension (MCT) problem and a corresponding maximum cost flow (MCF) problem. 

To do so, we introduce an auxiliary directed graph $T(D)$ which is closely related to the chord diagram (or Gauss diagram) of $D$. The graph $T(D)$ will be an augmentation of the cycle graph with (ordered) vertex set ${v_0, w_0, v_1, w_1, \dots , v_{2n-1}, w_{2n-1}}$. Conceptually, the added vertices $w_i$ are the slack variables needed to express each absolute value $|f(v_i) - f(v_{i-1})|$ in $\TV(f)$ as the sum of $(f(v_i) - f(v_{i-1}))_+$ and $(f(v_{i-1}) - f(v_i))_+$.

\begin{definition}
\label{def:tension graph} 
The~\emph{tension graph} $T(D)$ of an $n$-crossing knot diagram $D$ is a directed graph with $4n$ vertices and $5n$ edges. Let $v_0, \dotsc, v_{2n-1}$ and $w_0, \dotsc, w_{2n-1}$ be the $4n$ vertices. 
The edges are in the form
\[
\begin{aligned}
v_{i} &\rightarrow w_{i}, \quad i \in \{0, \dotsc, 2n-1\} \\
v_{i+1} &\rightarrow w_{i}, \quad i \in \{0, \dotsc, 2n-2\}, \quad(\text{and } v_{0} \rightarrow w_{2n-1}), \\
v_i &\rightarrow v_j, \quad \text{if arcs $a_i$ and $a_j$ are outgoing from a crossing where  $a_j$ passes over $a_i$}.
\\
\end{aligned}
\]
The edges $v_i \rightarrow v_j$ representing the crossings form a perfect matching between the vertices $v_i$, so we call them~\emph{matching edges} in $T(D)$. We call the other edges~\emph{cycle edges} as, if they were undirected, they would form a $4n$ vertex cycle graph with the $v_i$ and $w_i$.
\end{definition}

We now restate our problem as a constrained optimization problem on the tension graph~$T(D)$:
\begin{equation}
\min_{f} \TV(f) \text{ subject to } f(v_j) - f(v_i) \geq 1 \text{ for each matching edge } v_i \rightarrow v_j.
\label{eq:fundamental}
\end{equation}
We identify functions on the $4n$ vertices of $T(D)$ with vectors in $\R^{4n}$ and functions on the $5n$ edges of $T(D)$ with vectors in $\R^{5n}$. Moreover, we define the cost function $c$ and the lower constraint function $b$ by:
\begin{equation}
	c(e) \ceq 
		\begin{cases}
		0 &\text{if $e$ is a matching edge,}\\
		1 &\text{if $e$ is a cycle edge,}
	\end{cases}
	\qquad 
	b(e) \ceq 
		\begin{cases}
		1 &\text{if $e$ is a matching edge,}\\
		0 &\text{if $e$ is a cycle edge.}
	\end{cases}
	\label{eq:costs and bounds}
\end{equation}
Then we consider the minimum cost tension problem 
\begin{equation}
	\begin{split}
	\min_{f \in \R^{4n}} \sum_{e \in E} c(e) &\pars{f(\head(e)) - f(\tail(e))} \\
	&\text{ subject to }
	f(\head(e)) - f(\tail(e)) \geq b(e)
	\text{ for all $e \in E$.}
	\end{split}	
	\label{eq:mct}
\end{equation}

\begin{lemma}
If $f$ is an optimal solution to~\eqref{eq:mct}, then $f(w_i) = \max(f(v_{i}), f(v_{i+1}))$ and therefore the sum over $e \in \{ v_i \rightarrow w_i, v_{i+1} \rightarrow w_i\}$ of $c(e) \pars{f(\head(e)) - f(\tail(e))}$ is given by $|f(v_i) - f(v_{i+1})|.$
\label{lem:optimax}
\end{lemma}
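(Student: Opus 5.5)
Each $w_i$ is a local variable of the problem: it appears only in the two cycle edges $v_i \rightarrow w_i$ and $v_{i+1} \rightarrow w_i$, with indices taken mod $2n$ so that $v_{2n} = v_0$. I will therefore hold the $v$-values fixed and optimize over $w_i$ alone. Fix an optimal $f$ for~\eqref{eq:mct}. The terms of the objective involving $w_i$ are
\[
c(v_i \rightarrow w_i)\pars{f(w_i) - f(v_i)} + c(v_{i+1} \rightarrow w_i)\pars{f(w_i) - f(v_{i+1})} = 2f(w_i) - f(v_i) - f(v_{i+1}),
\]
because both edges are cycle edges with cost $1$. By~\eqref{eq:costs and bounds} the constraints involving $w_i$ are just $f(w_i) - f(v_i) \geq 0$ and $f(w_i) - f(v_{i+1}) \geq 0$. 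So feasibility is exactly $f(w_i) \geq \max(f(v_i), f(v_{i+1}))$, and no other constraint or objective term mentions $w_i$.

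Now argue by contradiction. Suppose $f(w_i) > \max(f(v_i), f(v_{i+1}))$. Define $f'$ to agree with $f$ everywhere except $f'(w_i) = \max(f(v_i), f(v_{i+1}))$. Then $f'$ is still feasible, since only the two constraints at $w_i$ changed and both still hold. Its objective value is smaller by $2\pars{f(w_i) - f'(w_i)} > 0$, contradicting the optimality of $f$. Hence $f(w_i) = \max(f(v_i), f(v_{i+1}))$ for every $i$, including the wraparound index $i = 2n-1$, whose second edge is $v_0 \rightarrow w_{2n-1}$.

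Finally, substitute this value into the expression above:
\[
2\max(a,b) - a - b = |a - b|
\]
with $a = f(v_i)$ and $b = f(v_{i+1})$. This is the claimed contribution $|f(v_i) - f(v_{i+1})|$.

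There is no real obstacle here. The only points needing care are the indexing of the wraparound edge and checking that the matching edges never touch the $w$-vertices, so the local modification cannot break any other constraint; both follow directly from \cref{def:tension graph}. As a corollary, the optimum of~\eqref{eq:mct} equals the optimum of~\eqref{eq:fundamental}.
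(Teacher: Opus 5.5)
Your proof is correct and is essentially the paper's argument: the two cycle-edge constraints force $f(w_i) \geq \max(f(v_i), f(v_{i+1}))$, and a strict inequality would let you lower $f(w_i)$, keeping feasibility and reducing the cost, which contradicts optimality. You also spell out two points the paper leaves implicit: that $w_i$ touches no matching edge, and the identity $2\max(a,b) - a - b = |a-b|$.
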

\begin{proof} 
We know $f(w_i) - f(v_i) \geq b(v_i \rightarrow w_i) = 0$ and $f(w_i) - f(v_{i+1}) \geq b(v_{i+1} \rightarrow w_i) = 0$. Therefore, $f(w_i) \geq f(v_i)$ and $f(w_i) \geq f(v_{i+1})$, so $f(w_i) \geq \max(f(v_{i}), f(v_{i+1}))$. But if $f(w_i) > \max(f(v_{i}), f(v_{i+1}))$ we could reduce the overall cost of $f$ (and maintain feasibility) by lowering $f(w_i)$ to $\max(f(v_{i}), f(v_{i+1}))$. It follows immediately that 
\[
c(v_i \rightarrow w_i) \pars{f(w_i) - f(v_i)} + c(v_{i+1} \rightarrow w_i) \pars{f(w_i) - f(v_{i+1})} = 
|f(v_i) - f(v_{i+1})|.\qedhere
\]
\end{proof}

\begin{lemma}
If $f \in \R^{4n}$ is an optimal solution of~\eqref{eq:mct}, then the cost of $f$ is equal to $\TV(f)$. Further, $ f(\head(e)) - f(\tail(e)) \geq b(e)$ for all edges $e$ of $T(D)$. Therefore, solving~\eqref{eq:mct} is equivalent to solving~\eqref{eq:fundamental}.
\label{lem:mct solves fundamental}
\end{lemma}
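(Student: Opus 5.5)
The plan is to decompose the cost of~\eqref{eq:mct} edge by edge and then apply \cref{lem:optimax} to each pair of cycle edges. The cost vanishes on matching edges, since $c(e)=0$ there. The cycle edges split into the $2n$ pairs $\{v_i \rightarrow w_i,\ v_{i+1}\rightarrow w_i\}$, with indices read cyclically so that the last pair is $\{v_{2n-1}\rightarrow w_{2n-1},\ v_0\rightarrow w_{2n-1}\}$. By \cref{lem:optimax}, at an optimal $f$ each pair contributes exactly $|f(v_i)-f(v_{i+1})|$. Summing over $i$ gives precisely the jumps in the definition of $\TV(f)$, where $f$ is viewed as a function of the $v_i$. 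Hence the cost of an optimal $f$ equals $\TV(f)$.

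The second assertion, $f(\head(e))-f(\tail(e))\ge b(e)$ for all $e$, is simply feasibility of $f$ for~\eqref{eq:mct}. Restricting to the matching edges $v_i\rightarrow v_j$, where $b=1$, gives exactly the constraints $f(v_j)-f(v_i)\ge 1$ of~\eqref{eq:fundamental}. So the restriction of an optimal $f$ to the $v_i$ is feasible for~\eqref{eq:fundamental}.

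For the equivalence I would compare optimal values in both directions.

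\emph{From~\eqref{eq:fundamental} to~\eqref{eq:mct}.} Given any $g$ feasible for~\eqref{eq:fundamental}, extend it to all of $T(D)$ by setting $g(w_i)\ceq\max(g(v_i),g(v_{i+1}))$. This extension satisfies the cycle-edge constraints (lower bound $0$) and the matching-edge constraints. The computation of \cref{lem:optimax} uses only this formula for $w_i$, not optimality, so the extension has cost $\TV(g)$. Therefore $\min\eqref{eq:mct}\le \min\eqref{eq:fundamental}$.

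\emph{From~\eqref{eq:mct} to~\eqref{eq:fundamental}.} An optimal $f$ for~\eqref{eq:mct} restricts to a feasible point of~\eqref{eq:fundamental} with $\TV$ equal to its cost, by the first part. This gives the reverse inequality.

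Consequently the optimal values agree, optimal solutions restrict to optimal solutions, and the max-extension sends optimal solutions of~\eqref{eq:fundamental} to optimal solutions of~\eqref{eq:mct}.

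The only point needing care is the existence of an optimum. Both problems are bounded below, since the objectives are nonnegative: in~\eqref{eq:mct} every cost term has $c\ge0$ and $f(\head(e))-f(\tail(e))\ge b(e)\ge 0$. They are also feasible, for instance by taking heights that strictly increase along some linear order compatible with the crossing constraints. As linear programs that are feasible and bounded, they attain their minima. I expect no real obstacle beyond the cyclic index bookkeeping for the wraparound pair.
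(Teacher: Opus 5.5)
Your proposal is correct and follows essentially the same route as the paper. You use \cref{lem:optimax} on each pair of cycle edges to get cost $=\TV(f)$, read off the matching-edge constraints from feasibility, and use the extension $f(w_i)\ceq\max(f(v_i),f(v_{i+1}))$ to show the two problems have the same optimal value. Your added remarks on attainment of the optimum and on the wraparound index are sound (the paper gets feasibility from the bilevel height function \eqref{eq:bilevel}), but they do not change the argument.
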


\begin{proof}
By~\cref{lem:optimax}, the total cost of the cycle edges is $\sum_i | f(v_i) - f(v_{i+1}) |=  \TV(f)$. Since $c(e) = 0$ for all matching edges, this is the total cost of $f$. Further, for each matching edge, $b(v_i \rightarrow v_j) = 1$ so $f(v_j) - f(v_i) \geq 1$, as required.
Conversely, any feasible solution $f$ of \eqref{eq:fundamental} extends to a feasible solution of \eqref{eq:mct} with cost $\TV(f)$ by setting $f(w_i) \ceq \max(f(v_i), f(v_{i+1}))$. So the two problems have the same minimum value, and restricting an optimal solution of \eqref{eq:mct} to the $v_i$ solves \eqref{eq:fundamental}.
\end{proof}

\begin{example} \label{example:bilevel}
	A valid height function $f$ is given by
	\begin{equation}
		f(v_i) \ceq 	
	\begin{cases} 
		 0& \text{if $a_i$ is outgoing as an underarc,} \\
		 1& \text{if $a_i$ is outgoing as an overarc.}
	\end{cases}
	\label{eq:bilevel}
	\end{equation}
\end{example}
This height function is too flat to make good embeddings for \reapr, as projections of these embeddings to new diagrams tend to simply recover the original diagram. We will see below that if $D$ is alternating, no improvement is possible. However, if $D$ is nonalternating, we may reduce $\TV$ further. To see this, we now transform the MCT problem to its dual MCF problem. Again, we identify functions on the $5n$ edges of $T(D)$ with vectors in $\R^{5n}$.
 
\begin{proposition}
The minimum cost tension problem~\eqref{eq:MinimumCostTensionProblem} is the linear programming dual of the following maximum cost flow problem where $g \in \R^{5n}$ is a vector of (non-negative) flows on edges:
\begin{equation}
\max_{g \in \R^{5n}} \!\!\!\!\!\! \sum_{e \text{\vphantom{$|^|$} a matching edge}} \!\!\!\!\!\!\!\!\!\!\! g(e), 
\text{ subject to }
\begin{cases}
g(e) \geq 0 & \text{ for all $e$,} \\
\sum\limits_{\{e \mid v = \head(e)\}} \!\!\!\!\!\! g(e) - \!\!\!\!\!\! \sum\limits_{\{e \mid v = \tail(e)\}} \!\!\!\!\!\! g(e) + s(v) = 0 &
\text{ for all $v$.} \\
\end{cases}
\label{eq:MaximumCostFlow}
\end{equation}
where $s(v) = +2$ if $v$ is one of the $v_i$ and $s(v) = -2$ if $v$ is one of the $w_i$. That is, each $v_i$ is a supply vertex with supply $2$ and each $w_i$ is a demand vertex with demand $2$.
\end{proposition}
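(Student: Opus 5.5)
The plan is to write \eqref{eq:mct} in standard matrix form and read off its linear programming dual. Let $B \in \R^{5n \times 4n}$ be the edge--vertex incidence matrix of $T(D)$, with $(Bf)(e) = f(\head(e)) - f(\tail(e))$. Then \eqref{eq:mct} (which is the problem the statement calls \eqref{eq:MinimumCostTensionProblem}) reads
\[
\min_{f \in \R^{4n}} \; c^{\mathsf T} B f \quad\text{subject to}\quad B f \geq b,
\]
where $f$ is a free variable. The standard dual of $\min\{\, \tilde c^{\mathsf T} f : A f \geq b,\ f \text{ free}\,\}$ is $\max\{\, b^{\mathsf T} g : A^{\mathsf T} g = \tilde c,\ g \geq 0\,\}$. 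Taking $A = B$ and $\tilde c = B^{\mathsf T} c$, the dual is
\[
\max_{g \geq 0} \; b^{\mathsf T} g \quad\text{subject to}\quad B^{\mathsf T} g = B^{\mathsf T} c .
\]
The rest of the proof is to unpack each piece of this in the graph's terms.

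\emph{Objective.} By \eqref{eq:costs and bounds}, $b(e)=1$ on matching edges and $0$ on cycle edges. Hence $b^{\mathsf T} g$ is the sum of $g(e)$ over matching edges, which is the objective in \eqref{eq:MaximumCostFlow}.

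\emph{Left-hand side of the constraint.} For any vertex $v$, $(B^{\mathsf T} g)(v) = \sum_{\head(e)=v} g(e) - \sum_{\tail(e)=v} g(e)$, i.e.\ the net inflow of $g$ at $v$.

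\emph{Right-hand side of the constraint.} Compute $(B^{\mathsf T} c)(v)$ vertex by vertex from \cref{def:tension graph}.
\begin{itemize}
\item Each $v_i$ is the tail of exactly two cycle edges, $v_i \to w_i$ and $v_i \to w_{i-1}$ (indices mod $2n$, which covers $v_0 \to w_{2n-1}$). It is also an endpoint of exactly one matching edge, which has cost $0$. So $(B^{\mathsf T} c)(v_i) = -2$.
\item Each $w_i$ is the head of exactly two cycle edges and lies on no matching edge. So $(B^{\mathsf T} c)(w_i) = +2$.
\end{itemize}
The constraint $B^{\mathsf T} g = B^{\mathsf T} c$ therefore says that the net inflow at $v$ plus $s(v)$ equals zero, with $s(v_i)=+2$ and $s(w_i)=-2$. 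This is exactly the conservation constraint in \eqref{eq:MaximumCostFlow}, so $v_i$ are supply vertices and $w_i$ are demand vertices.

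Finally, I would note that the primal is feasible, since the bilevel function of \cref{example:bilevel}, extended to the $w_i$ as in \cref{lem:mct solves fundamental}, satisfies the constraints. The dual is also feasible: $g = c$ works, because $c \geq 0$ and $B^{\mathsf T} c = B^{\mathsf T} c$ trivially. By strong LP duality both problems then have optimal solutions with equal values. This is more than the proposition strictly asserts, but it is what will be used later.

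The only real obstacle is bookkeeping of signs. The orientation convention of $B$ (head minus tail) must match the sign convention for the supplies $s(v)$. I also need to check that each $v_i$ has cycle out-degree exactly $2$ including the wraparound index, and that the matching edges contribute nothing to $B^{\mathsf T} c$ because their cost is zero.
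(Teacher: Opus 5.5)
Your proposal is correct and matches the paper's proof. You write the tension problem as $\min c^{\mathsf T} A f$ subject to $A f \geq b$ with $f$ free, take the standard dual $\max b^{\mathsf T} g$ subject to $A^{\mathsf T} g = A^{\mathsf T} c$, $g \geq 0$, and read off from $(A^{\mathsf T} c)_{v_i} = -2$ and $(A^{\mathsf T} c)_{w_i} = +2$ that the $v_i$ are supplies and the $w_i$ are demands; your explicit note that matching edges contribute nothing because their cost is zero, and the feasibility/strong-duality remark, are careful but inessential additions.
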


\begin{proof}
Let $A$ be the $5n \times 4n$ incidence matrix\footnote{This matrix represents the discrete exterior derivative on the directed graph $T(D)$.} of the directed graph $T(D)$. That is, $A_{e,\head(e)} = 1$, $A_{e,\tail(e)} = -1$ for each edge in $T(D)$, and all other entries are zero. With the cost and bound vectors $c$ and $b$ from~\eqref{eq:costs and bounds}, the minimum cost tension problem~\eqref{eq:mct} can be written as follows:
\begin{equation}
	\min_{f \in \R^{4n}}
	c^\transp A \, f
	\quad
	\text{subject to}
	\quad
	A \, f \geq b
	.
	\label{eq:MinimumCostTensionProblem}
\end{equation}
The LP dual is
\begin{equation}
	\max_{g \in \R^{5n}}
	b^\transp  g
	\quad
	\text{subject to}
	\quad
	A^\transp g = A^\transp c
	\qand
	g \geq 0.
	\label{eq:MaximumCostFlowProblem}
\end{equation}
We interpret $g$ as a flow on $T(D)$: the constraint $A^\transp g = A^\transp c$ requires net flow $(A^\transp c)_v$ into each vertex $v$. Since each $v_i$ is the tail of exactly two cycle edges and the head of none, $(A^\transp c)_{v_i} = -2$; it follows that $v_i$ is a~\emph{supply}. Likewise the inflow $(A^\transp c)_{w_i} = +2$, so $w_i$ is a~\emph{demand}. The objective $b^\transp g$ counts only flow on matching edges. This is the claimed maximum cost flow problem.
\end{proof}

The formulation in terms of an MCT problem and its dual allows us to draw some conclusions about optimal solutions.

\begin{proposition}
In an $n$-crossing diagram $D$, any height function $f$ solving~\eqref{eq:fundamental} has $\TV(f) \leq 2n$. Moreover, $D$ is alternating if and only if $\TV(f) = 2n$.
\label{prop:mintv and alternating}
\end{proposition}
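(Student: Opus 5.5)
The upper bound comes from the bilevel height function of \cref{example:bilevel}. Each crossing has one outgoing overarc and one outgoing underarc, so that $f$ takes values in $\{0,1\}$ and is feasible for~\eqref{eq:fundamental}. Its total variation is the number of indices $i$ with $f(v_i)\neq f(v_{i+1})$, and this is at most $2n$ because there are only $2n$ jumps. Hence the minimum satisfies $\TV(f)\le 2n$. If $D$ is alternating, consecutive outgoing arcs alternate between over and under, so every jump of the bilevel function equals $1$ and its value is exactly $2n$. The content of the statement is therefore that the optimum is exactly $2n$ for alternating diagrams and strictly smaller otherwise.

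\emph{Lower bound for alternating $D$.} I would use weak duality with the maximum cost flow problem~\eqref{eq:MaximumCostFlow}: any feasible flow $g$ gives $\TV(f)\ge \sum_{\text{matching}} g(e)$ for every feasible $f$. So it suffices to exhibit a flow with value $2n$. Each $v_i$ supplies $2$ and has two cycle edges out, to $w_i$ and $w_{i-1}$, and possibly one matching edge. In an alternating diagram the vertices $v_i$ alternate between tails and heads of matching edges: outgoing underarcs are tails and outgoing overarcs are heads. I would route the flow as follows. Each tail $v_i$ (underarc) sends $1$ unit along its matching edge to its partner $v_j$ and $1$ unit split between its two $w$ neighbors. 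Each head $v_j$ then has $3$ units, its own $2$ plus $1$ incoming, and sends them to $w_j$ and $w_{j-1}$. The bookkeeping is that each $w_i$ needs exactly $2$ units from $v_i$ and $v_{i+1}$. Since these are one tail and one head, I would let each tail send $\tfrac12$ to each neighboring $w$ and each head send $\tfrac32$ to each. Every $w_i$ then receives $\tfrac12+\tfrac32=2$, so conservation holds. The matching flow totals $n$ units, one per crossing. That only gives $\TV\ge n$, so this routing must be refined.

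\emph{Fixing the count.} Looking at the dual again, the identity $c^\transp A f = \TV$ means the objective weights matching edges with $b=1$. A flow of value $2n$ therefore needs $2$ units on each matching edge. I would redo the routing: each tail sends all $2$ of its units along its matching edge and nothing to the cycle. Each head then has $4$ units and sends $2$ to each of $w_j$ and $w_{j-1}$. Alternation guarantees that each $w_i$ is adjacent to exactly one head, so it receives exactly $2$. This flow is feasible with value $2n$, which proves $\TV(f)=2n$ for alternating $D$.

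\emph{Nonalternating $D$.} Here I would show strict inequality directly. If $D$ is not alternating, some consecutive pair $v_i,v_{i+1}$ are both outgoing overarcs, or both outgoing underarcs. The bilevel function has jump $0$ there, so its total variation is at most $2n-2$, since the number of jumps on a cycle is even. Thus the minimum is $<2n$. Conversely, if the minimum equals $2n$, then $D$ must be alternating. The main obstacle I anticipate is the alternating lower bound, namely getting the dual flow right so that it saturates $2$ units per matching edge. The nonalternating direction is immediate from the bilevel example.
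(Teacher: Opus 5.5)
Your proof is correct and follows the paper's argument. The bilevel height function gives the bound $\TV(f)\le 2n$ and the strict inequality for nonalternating diagrams, and your refined routing (each tail sends both units across its matching edge, each head sends $2$ to each neighboring $w$) is exactly the paper's dual flow of value $2n$; the only differences are that you conclude via weak duality ($\TV(f)\ge b^T g = 2n$ for every feasible $f$) where the paper cites strong duality, and that your first fractional routing of value $n$ is unnecessary and can be deleted.
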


\begin{proof}
The example height function~\eqref{eq:bilevel} is a feasible solution to the convex programming problem~\eqref{eq:fundamental}; therefore an optimal solution must exist. This solution has $|f(v_i) - f(v_{i-1})| \leq 1$ for all $i$, so $\TV(f) \leq 2n$. In fact, $\TV(f)$ is equal to the number of times $a_i$ switches between being an overarc and underarc as we traverse the diagram, so it is $< 2n$ if $D$ is nonalternating and equal to $2n$ if $D$ is alternating. 

This last shows that an optimal solution $f$ to~\eqref{eq:MinimumCostTensionProblem} for an alternating diagram $D$ has objective function $c^T A f \leq 2n$. We now demonstrate a feasible solution $g$ for~\eqref{eq:MaximumCostFlowProblem} with objective function value $b^T g = 2n$. This tells us that an optimal solution has $b^T g \geq 2n$. By strong duality, the objective functions for the dual problems~\eqref{eq:MinimumCostTensionProblem} and~\eqref{eq:MaximumCostFlow} have the same value at an optimal solution; so this will suffice to show that this optimal value is $2n$. But this is also the total variation of the solution to~\eqref{eq:fundamental} by~\cref{lem:mct solves fundamental}.

Here is our feasible solution. For each matching edge $v_i \rightarrow v_j$, let us assign flows 
\[\begin{tikzcd}[ampersand replacement=\&]
	{w_i} \&\& {w_j} \\
	{v_i} \&\& {v_j} \\
	{w_{i-1}} \&\& {w_{j-1}}
	\arrow["0"', from=2-1, to=1-1]
	\arrow["{+2}", from=2-1, to=2-3]
	\arrow["0", from=2-1, to=3-1]
	\arrow["{+2}", from=2-3, to=1-3]
	\arrow["{+2}"', from=2-3, to=3-3]
\end{tikzcd}\]

Since the $v_i \rightarrow v_j$ are a perfect matching, this assigns flows to all edges. Further, it clearly obeys the conservation constraints at $v_i$ and $v_j$. To see that it obeys the conservation constraints at all other vertices, note that since $D$ is alternating the $v_i$ alternate between being heads and tails of their matching edges as we proceed around the cycle. Therefore, each $w_k$ is adjacent to exactly one $v_l$ which has incoming flow from its matching edge. The corresponding $v_l \rightarrow w_k$ edge supplies flow~$2$ (exactly what we need to balance the demand at $w_k$) while the other incoming edge for $w_k$ carries no flow at all.
\end{proof}

We can now describe the solution to the minimization problem quite explicitly. 

\begin{lemma}
There exists an optimal solution to~\eqref{eq:MaximumCostFlow} where each edge carries flow of $0$ or $2$. \label{lem:zero-two}
\end{lemma}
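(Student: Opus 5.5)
The plan is to use total unimodularity of network flow problems and then rescale. Problem~\eqref{eq:MaximumCostFlow} is a flow problem on $T(D)$: the constraint matrix $A^\transp$ is the node--arc incidence matrix of a directed graph, so it is totally unimodular. The right-hand side is the supply/demand vector with entries $\pm 2$, which is even. We therefore substitute $g = 2h$. The conservation constraints then become the same constraints with supplies $+1$ at each $v_i$ and demands $1$ at each $w_i$. The objective is multiplied by $2$ and nonnegativity is preserved, so optimal solutions of the two problems correspond bijectively under $g=2h$.

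We first check that the rescaled problem has an optimum. Its feasible region is nonempty: halving the flow constructed in the proof of \cref{prop:mintv and alternating} gives a feasible point. That construction assumed $D$ alternating, so for general $D$ we argue instead via duality. The primal~\eqref{eq:MinimumCostTensionProblem} is feasible by~\eqref{eq:bilevel}, and its objective $\TV \geq 0$ is bounded below. Hence it has an optimum, and by strong duality the dual is feasible with a finite optimum. Since the feasible polyhedron $\{h \geq 0 : A^\transp h = A^\transp c/2\}$ has a totally unimodular constraint matrix and an integral right-hand side, all its vertices are integral. It contains no lines, because $h \geq 0$. So the bounded LP attains its optimum at a vertex, and we obtain an integral optimal $h$.

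Next we bound the flow on each edge, to show every edge carries $0$ or $1$. Each $w_i$ has in-degree $2$ (from $v_i$ and $v_{i+1}$) and out-degree $0$, with demand $1$. The two incoming flows are nonnegative integers summing to $1$, so one is $1$ and the other is $0$. Each $v_i$ has supply $1$, and its outgoing edges are its two cycle edges plus possibly one matching edge. Its incoming edge is possibly one matching edge. Every $v_i$ is incident to exactly one matching edge, either as its head or as its tail.

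The delicate step is bounding the matching-edge flow, since a priori flow could circulate. Because $T(D)$ has all cycle edges pointing into the sinks $w_i$, the only edges out of $v$-vertices into $v$-vertices are matching edges. These form a perfect matching, so there are no directed paths of length $\ge 2$ among the $v$'s and no directed cycles at all. For a matching edge $v_i \to v_j$, $v_j$ has no other incoming edge, and its outflow goes only to $w_j, w_{j-1}$, each carrying at most $1$ by the previous step. Conservation at $v_j$ gives $h(v_i\to v_j) + 1 = h(v_j\to w_j)+h(v_j\to w_{j-1}) \le 2$, so $h(v_i \to v_j) \le 1$. Hence $h$ takes values in $\{0,1\}$, and $g = 2h$ is the desired optimal flow with values in $\{0,2\}$. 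I expect the main care to be needed in the existence/vertex argument, namely justifying attainment at an integral vertex. The edge bounds are routine.
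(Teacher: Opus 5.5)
Your proof is correct and follows the same route as the paper: total unimodularity of the incidence matrix, halving the supplies and demands to $\pm 1$, taking an integral optimum of the halved problem, and bounding each edge's flow by conservation. The differences are minor. You get existence of an optimum from strong duality and the feasible bilevel height function, where the paper simply exhibits the feasible flow $\tfrac{1}{2}$ on every cycle edge. You also bound a matching edge by conservation at its head together with the cycle-edge bound, whereas the paper uses conservation at its tail, which has no incoming edges and so needs no auxiliary bound.
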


\begin{proof}
The constraint matrix of \eqref{eq:MaximumCostFlow} is the incidence matrix of the directed graph $T(D)$, which is totally unimodular (this goes back to Poincar\'e, see~\cite{Poincare1900}). By the theorem of Hoffman and Kruskal~\cite{HoffmanKruskal}, a linear program with totally unimodular constraint matrix and integral right-hand side attains its optimum, if it has one, at an integral point. Our right-hand side is not merely integral but even, so consider the flow problem obtained from \eqref{eq:MaximumCostFlow} by halving all supplies and demands to $\pm 1$. Scaling flows by $2$ is a bijection between the feasible sets of the halved and original problems which doubles the objective, so it matches optimal solutions to optimal solutions. The halved problem is feasible (route flow $\tfrac{1}{2}$ along every cycle edge and none along the matching edges) and its objective is bounded, so by Hoffman--Kruskal it has an integral optimal solution $g'$. Then $g \ceq 2g'$ is an optimal solution of \eqref{eq:MaximumCostFlow} in which every flow value is an even integer. Finally, every flow value of $g$ is at most $2$: each cycle edge carries at most $2$ because the flows into its head $w_i$ are non-negative and sum to exactly $2$, and each matching edge carries at most $2$ by conservation at its tail. Hence each flow value is $0$ or~$2$.
\end{proof}

This gives us the following characterization of the minimal total variation energy of a diagram, which is rather knot-theoretic in flavor, but is most easily expressed in terms of virtual knot theory. Classically, each crossing in a knot diagram comes with over and under information. Virtual knot theory adds a third type of crossing, the ``virtual crossing'', which is neither over nor under. The ``virtualization'' of a crossing replaces an ordinary crossing by a virtual crossing. A virtually alternating diagram is one where the ordinary crossings of the knot alternate between over and under, but the knot may also pass through any number of virtual crossings between ordinary crossings. An example alternating virtualization of the nonalternating knot $8_{19}$ appears in~\cref{fig:819}. 

\begin{proposition}
Suppose that $D$ is an $n$-crossing diagram and $k$ is the smallest number for which we can convert $D$ into a virtually alternating diagram by virtualizing $k$ crossings. Any optimal solution to~\eqref{eq:fundamental} has $\TV(f) = 2(n-k)$. There is a corresponding height function with no more than $n-k$ maxima and $n-k$ minima.
\label{prop:virtualization}
\end{proposition}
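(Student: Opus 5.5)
The plan is to compute the optimal value of the dual problem~\eqref{eq:MaximumCostFlow} combinatorially and then transfer it to~\eqref{eq:fundamental} by strong duality and \cref{lem:mct solves fundamental}. By \cref{lem:zero-two} we may restrict to flows with values in $\{0,2\}$. Halving, each $v_i$ supplies one unit and each $w_i$ absorbs one unit. The objective is then $2m$, where $S$ is the set of $m$ crossings whose matching edge $v_i \rightarrow v_j$ carries flow.

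Fix such a flow and look at the cycle $v_0, w_0, v_1, w_1, \dots$. Since everything is $0/1$ after halving, the flow has a rigid local form. If the matching edge of a crossing in $S$ is $v_i \rightarrow v_j$, then its underarc endpoint $v_i$ sends nothing to its two cycle neighbours $w_{i-1}, w_i$. Its overarc endpoint $v_j$ receives one extra unit and so sends one unit to each of $w_{j-1}$ and $w_j$. Every $v_l$ at a crossing not in $S$ sends its single unit to exactly one of its two neighbouring $w$'s.

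Now cut the cycle at the endpoints of crossings in $S$ and count on a segment between two consecutive such endpoints. Suppose the segment contains $r$ non-$S$ vertices $v_l$; then it contains $r+1$ vertices $w$. An over-endpoint covers the adjacent $w$ on its side of the segment, and an under-endpoint covers nothing.
\begin{itemize}
\item If both ends of the segment are over-endpoints, only $r-1$ of the $w$'s remain, but $r$ units must be absorbed. This is infeasible.
\item If both ends are under-endpoints, $r+1$ of the $w$'s remain with only $r$ units available. This is also infeasible.
\item If one end is an over-endpoint and the other an under-endpoint, exactly $r$ of the $w$'s remain. A "chain" routing, with every $v_l$ pushing toward the under end, works.
\end{itemize}
Hence a feasible $0/2$ flow with matching support $S$ exists if and only if the endpoints of the $S$-crossings alternate over/under around the knot. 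This says exactly that virtualizing the crossings outside $S$ yields a virtually alternating diagram. When $S=\emptyset$, rotate all units in one direction around the cycle.

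It follows that the maximum of~\eqref{eq:MaximumCostFlow} is $2\max|S| = 2(n-k)$. By strong LP duality and \cref{lem:mct solves fundamental}, the minimum of~\eqref{eq:fundamental} is therefore $2(n-k)$. The main obstacle I expect is making the segment-counting step airtight, in particular making sure that the $0/1$ structure forces the local behaviour described above at every vertex. This uses conservation at each $v$ together with the bound that flows are at most $2$, both taken from the proof of \cref{lem:zero-two}.

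For the extrema statement, I would take an optimal $f$ for~\eqref{eq:mct}, integral by total unimodularity, together with the optimal flow constructed above for a minimizing $S$. Complementary slackness forces every cycle edge with positive flow to be tight, meaning $f(w)$ equals $f$ at the $v$ that feeds it. Along each chain segment, this tightness should force $f$ to be monotone, increasing from the under-endpoint toward the over-endpoint. Consequently $f$ can change direction only at endpoints of $S$-crossings: local maxima occur only at the $n-k$ over-endpoints and local minima only at the $n-k$ under-endpoints. I would verify the monotonicity claim first, since that is where the argument could be delicate.
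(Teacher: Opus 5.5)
Your proposal is correct and is essentially the paper's proof: a $0/2$ optimal flow from \cref{lem:zero-two}, a counting argument on the stretch of cycle between consecutive flow-edge endpoints that forces over/under alternation (plus the converse construction, which you make explicit via chain routing), strong duality with \cref{lem:mct solves fundamental} to get $2(n-k)$, and complementary slackness for the extrema. In that last step, tightness on the flow-carrying cycle edges alone does not give monotonicity; you must combine it with the primal constraints $f(w)\ge f(v)$ on the zero-flow cycle edges, exactly as the paper does, and the integrality of $f$ you invoke is not needed.
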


\begin{proof}
Call the matching edges which carry flow 2 in the optimal solution to~\eqref{eq:MaximumCostFlow} constructed in \cref{lem:zero-two} the~\emph{flow edges}. Suppose there are $m$ such edges and consider their $2m$ incident vertices $v_{i_1}, \dots, v_{i_{2m}}$, which we assume to be in order among the cycle edges (recall~\autoref{def:tension graph} and also see \autoref{fig:819}). We label each of the $v_{i_1}, \dots, v_{i_{2m}}$ as ``incoming'' if they are the head of their (unique) flow edge and ``outgoing'' if they are the tail of their (unique) flow edge. Our goal is to show that the incoming and outgoing $v_{i_j}$ alternate as we go around the cycle edges.

We prove this by contradiction. With respect to the cyclic order on this list, suppose that two consecutive vertices have incoming flow edges. Without loss of generality, assume they are $v_{i_1}$ and $v_{i_2}$. First, we claim that the edges $v_{i_1} \rightarrow w_{i_1}$ and $v_{i_2} \rightarrow w_{i_2-1}$ both carry flow $+2$. Since $v_{i_1}$ receives $2$ along its flow edge and has supply $2$, its two outgoing cycle edges carry a total of $4$; each carries $0$ or $2$ by \cref{lem:zero-two}, so both carry exactly $2$, and likewise for $v_{i_2}$. Thus, they deliver $+4$ to the portion of the cycle between them: $w_{i_1}, v_{i_1+1}, \dotsc, v_{i_2-1}, w_{i_2-1}$. On the other hand, this region can only absorb $+2$, since there is one more $w$ vertex than $v$ vertices. This is a contradiction. (Remember, we have assumed that no matching edges carry flow into or out of this region.)

A similar argument shows that adjacent vertices in this set cannot both have outgoing flow edges. It follows that the incoming and outgoing flow edges alternate, as desired. Since the direction of flow goes from underarc to overarc, this means that if we follow the diagram~$D$ around, ignoring crossings which correspond to matching edges with no flow, we must alternate between overarcs and underarcs. Of course, this is the definition of a virtually alternating diagram. 

However, we may also observe that if we choose any collection of $m$ matching edges with this alternation property, we may construct a corresponding solution to the flow problem by routing flow $2$ across these matching edges (and distributing flow on the cycle edges accordingly). Therefore, the optimal solution to~\eqref{eq:MaximumCostFlow} must be given by a maximum size such subset of crossings of~$D$. But that is the complement of a minimum size subset of crossings we need to virtualize.
It follows that $m=n-k$, the optimal value of \eqref{eq:MaximumCostFlow} is $2(n-k)$, and by strong duality together with \cref{lem:mct solves fundamental}, every optimal solution of \eqref{eq:fundamental} has $\TV(f) = 2(n-k)$.

We can learn more about the structure of the optimal height function $f$ from the optimal flow function $g$ using complementary slackness. Suppose that $v_{i_j}$ is incoming, and consider the cycle edges between $v_{i_j}$ and the (outgoing) $v_{i_{j+1}}$, 
\[
v_{i_j} \rightarrow w_{i_j} \leftarrow v_{i_j+1} \rightarrow \cdots \leftarrow v_{i_{j+1}}.
\]
The edges $v_{i_j} \rightarrow w_{i_j}, v_{i_j+1} \rightarrow w_{i_j+1}, \dotsc, v_{i_{j+1}-1} \rightarrow w_{i_{j+1}-1}$ all carry nonzero flow (in fact, they all carry flow $+2$). By complementary slackness, a nonzero dual variable implies an exactly satisfied primal constraint, so we know that the corresponding primal inequalities are actually equalities:
\[
f(v_{i_j}) = f(w_{i_j}), f(v_{i_j+1}) = f(w_{i_j+1}), \dotsc, f(v_{i_{j+1}-1}) = f(w_{i_{j+1}-1}). 
\]
On the other hand, the interleaved cycle edges $w_{i_j} \leftarrow v_{i_j+1}, w_{i_j+1} \leftarrow v_{i_j+2}, \dotsc, w_{i_{j+1}-1} \leftarrow v_{i_{j+1}}$ all carry zero flow. Here, we know only that the primal constraints apply (they may or may not have slack), leaving us with $f(w_{i_j}) \geq f(v_{i_j+1}), f(w_{i_j+1}) \geq f(v_{i_j+2}), \dotsc, f(w_{i_{j+1}-1}) \geq f(v_{i_{j+1}})$. Combined, these inequalities tell us that $f(v_{i_j}) \geq f(v_{i_j+1}) \geq \cdots \geq f(v_{i_{j+1}})$, or that $f$ is nonincreasing between $v_{i_j}$ and $v_{i_{j+1}}$. Symmetrically, if $v_{i_j}$ had been outgoing, we would have learned that $f$ was nondecreasing between $v_{i_j}$ and $v_{i_{j+1}}$. That is, an incoming $v_{i_j}$ is a (weak) local maximum for $f$ and an outgoing $v_{i_j}$ is a (weak) local minimum. In particular, there are at most $n-k$ local minima and $n-k$ local maxima.
\end{proof}

\begin{figure}
\hphantom{.}
\hfill
\raisebox{-0.5\height}{\includegraphics[width=0.2\textwidth]{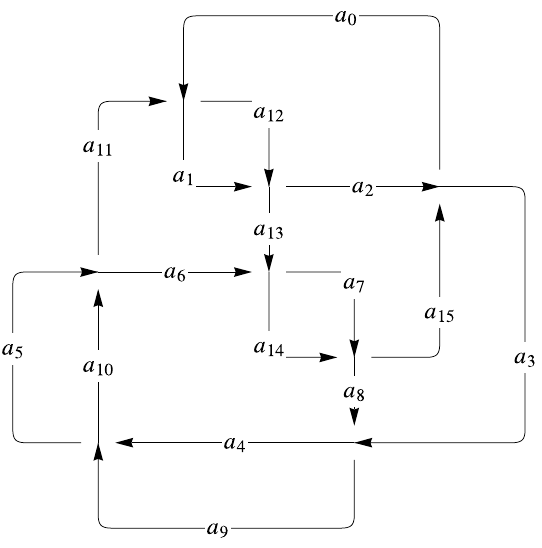}}
\hfill
\raisebox{-0.5\height}{\includegraphics[width=0.2\textwidth]{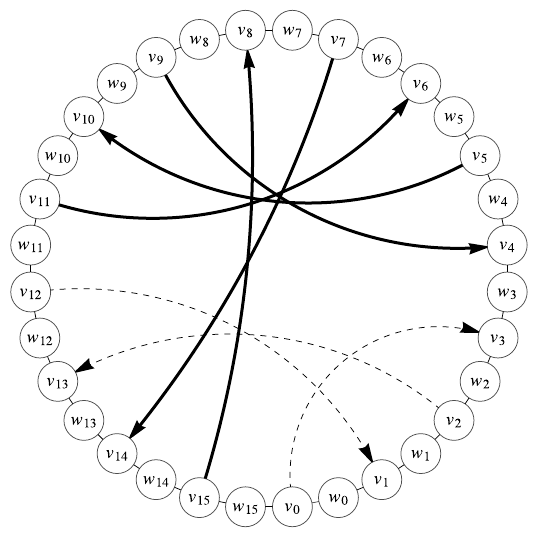}}
\hfill
\raisebox{-0.5\height}{\includegraphics[width=0.2\textwidth]{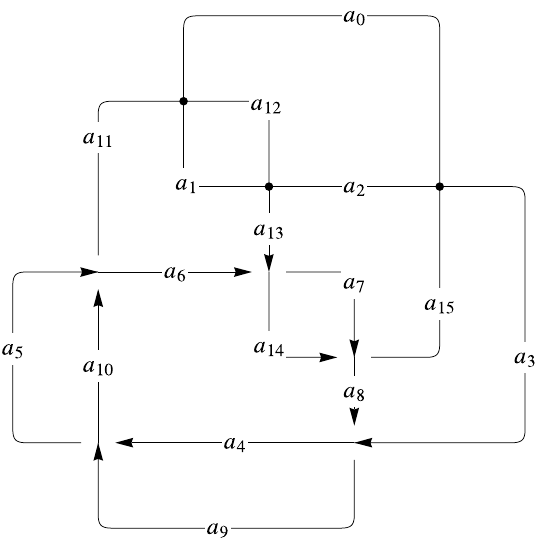}}
\hfill
\raisebox{-0.5\height}{\includegraphics[width=0.25\textwidth]{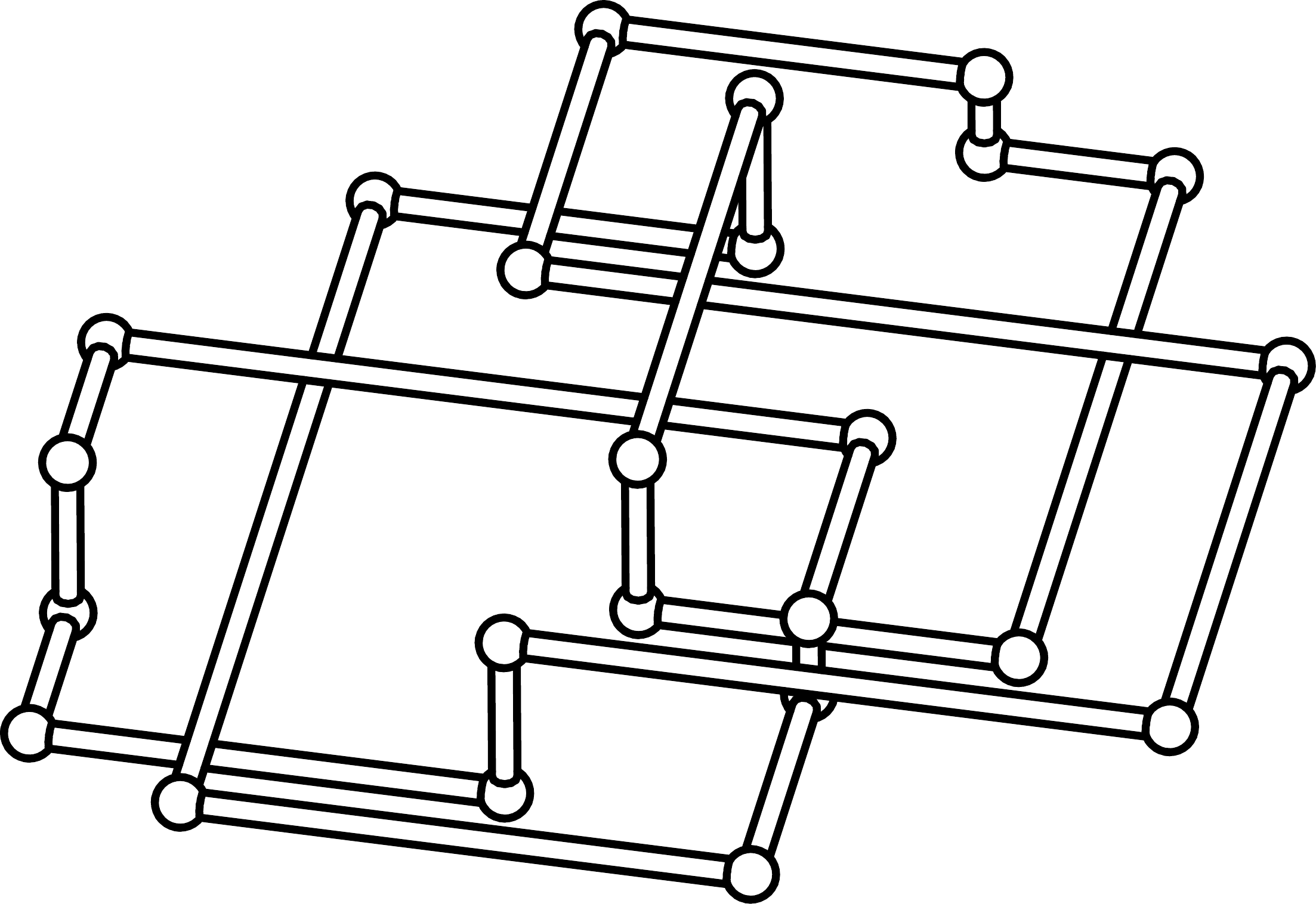}}
\hphantom{.}
\caption{On the left, a nonalternating diagram with $n=8$ crossings. This is the standard diagram for the $8_{19}$ knot. We have labeled the arcs $a_i$. In the center, we see the corresponding tension graph $T(D)$, with vertices $v_i$ corresponding to the arcs $a_i$ with vertices $w_i$ inserted between them. The $k=3$ matching edges which do not carry flow are dashed, while the $n-k = 5$ matching edges which carry flow $2$ are drawn in black. The maximum cost is therefore $10$. In the third panel, we see that if we virtualize the $3$ crossings corresponding to the dashed edges, the new knot is virtually alternating (that is, it is alternating if we ignore virtual crossings). This must be a smallest set of crossings which makes $D$ virtually alternating. On the far right, we see the corresponding lattice embedding, which has $3$ local maximum arcs; this is less than the maximum of $n-k=5$.}
\label{fig:819}
\end{figure}

Solving~\eqref{eq:fundamental} as a generic $\ell^1$ optimization problem with linear constraints using standard LP solvers was by far the slowest step in our initial implementation of the \Reapr algorithm. Rephrasing it as the MCF problem~\eqref{eq:MaximumCostFlow} was a very significant optimization. Using the network simplex algorithm from~\cite{MCFClass} reduces the computational cost to something comparable to the orthogonal graph layout.

\section{Test set and Performance Comparisons}
\label{sec:effectiveness}

We tested our algorithm on approximately 2.6 million unknot diagrams with between 5 and 130,155 crossings, drawn from 10 test sets which we have deposited on DataDryad~\cite{testsets}:\label{test sets}

\begin{enumerate}[label=(\Alph*)]
\item 200 random lattice unknots filling spheres, created by the BFACF~\cite{bergRandomPathsRandom1981,aragaodecarvalhoNewMonteCarloApproach1983,aragaodecarvalhoPolymers$g|varphi|^4$Theory1983} algorithm, 
\item the 18 unknots among the ``children's game'' knots of Gukov et al.~\cite{GukovLearning}, 
\item 200 self-avoiding unknotted 10,000-gons in tight spherical confinement~\cite{OliveraPC}, 
\item 5 complicated unknots of the ``hard Goeritz'' type from~\cite{BurtonHardUnknots},
\item the 21 ``hard unknot'' examples in~\cite{BurtonHardUnknots}, 
\item $27$ examples of rational tangle unknots from ``Kauffman's challenge''~\cite{kauffmanKnotInvariantsKnot2024},
\item 2,623,203 pass-reduced ``hard unknots'' found by Applebaum et al.~\cite{applebaumUnknottingNumberHard2025},
\item 600 unknotted random equilateral 3043-gons~\cite{CantarellaSchumacherShonkwiler2024}, 
\item 176 pass-reduced unknots found by Tuzun and Sikora~\cite{TuzunSikoraJones},
\item 384 pass-reduced ``very hard unknots'' from~\cite{applebaumUnknottingNumberHard2025}.
\end{enumerate}
\Reapr recognizes all of these diagrams as unknots. We compared \Reapr to diagrammatic and non-diagrammatic unknot recognition algorithms in \Regina \cite{Regina} and \SnapPy \cite{SnapPy}. The diagrammatic simplification method \texttt{Link.simplify()} in~\Regina alternates between random Reidemeister III moves and simplifications using RI and RII moves. We also used a nondiagrammatic method to recognize the unknot with~\Regina (this is ``\Regina triang.'' in~\cref{tab:benchmarks}):
\begin{align*}
&\texttt{L.simplifyToLocalMinimum()} \quad \text{(use RI and RII moves to simplify)}\\
&\texttt{T = L.complement()} \quad \text{(build simplified triangulation)}\\ 
&\texttt{T.simplify()} \quad\text{(run additional simplifications on triangulation)}\\
&\text{check } \texttt{T.isSolidTorus()==true} \quad\text{(see if we recognized the unknot)}
\end{align*}
\SnapPy's diagrammatic \texttt{Link.simplify(\textquotesingle global\textquotesingle)} method mixes pass moves and random RIII moves. We used the default limit of 100 random RIII moves. We used the \SnapPy method \texttt{Link.deconnect\_summand()} to split off connected summands where possible, which sometimes revealed further simplifications.
We also used a non-diagrammatic unknot recognition method with \SnapPy (this is ``\Snappy fund. grp.'' in~\cref{tab:benchmarks}):
\begin{align*}
&\texttt{L.simplify(\textquotesingle basic\textquotesingle)} \quad\text{(use RI and RII moves to simplify)}\\
&\texttt{E = L.exterior(with\_hyperbolic\_structure=false)} \quad\text{(triangulate complement)}\\
&\texttt{G = E.fundamental\_group(try\_hard\_to\_shorten\_relators=true)}\\
&\text{check } \texttt{G.num\_relators()==0} \quad\text{(see if we recognize the unknot)}
\end{align*}
All timings were obtained on a single core of a 2022 Mac Studio (Apple M1 Ultra). For \Regina and \SnapPy, we set a per-knot timeout of 100 seconds for the $\approx 2.6$ million `hard unknots' in set G, 1000 seconds for set A, and 3000 seconds for all other knots; no timeout was needed for \Reapr. The results are shown in~\cref{tab:benchmarks}.

\begin{table}
\centering

\begin{tabular}{@{}l@{\hspace{3pt}} r r l@{\hspace{2pt}}l l@{\hspace{2pt}}l l@{\hspace{2pt}}l l@{\hspace{2pt}}l l@{\hspace{2pt}}l@{}}
\toprule
 & & & \multicolumn{2}{c}{\Reapr} & \multicolumn{4}{c}{\Regina} & \multicolumn{4}{c@{}}{\Snappy} \\
\cmidrule(lr){4-5} \cmidrule(lr){6-9} \cmidrule(l){10-13}
set & knots & cr. &  & & \multicolumn{2}{c}{diagram} & \multicolumn{2}{c}{triang.} & \multicolumn{2}{c}{diagram} & \multicolumn{2}{c@{}}{fund.\ grp.} \\
\midrule
A & 200 & 111K & \textbf{8.65s} & \textbf{1.0} & 35h & .92 & 8.4h & .00$^\dag$ & 55h & .00$^\dag$ & 9.6h & \textbf{1.0} \\
B & 18 & 18 & .24 & \textbf{1.0} & \textbf{.08} & \textbf{1.0} & 8.42 & \textbf{1.0} & 4.30 & \textbf{1.0} & 3.90 & \textbf{1.0} \\
C & 200 & 18K & \textbf{1.08s} & \textbf{1.0} & 18m & .92 & 79h & \textbf{1.0} & 72h & \textbf{1.0} & 14m & \textbf{1.0} \\
D & 5 & 85 & 21.0 & \textbf{1.0} & 9.01 & .20 & 212 & \textbf{1.0} & 1.5s & .60 & \textbf{8.30} & \textbf{1.0} \\
E & 21 & 45 & 25.0 & \textbf{1.0} & 5.71 & .24 & 50m & .95 & 2.7s & .86 & \textbf{7.20} & \textbf{1.0} \\
F & 27 & 50 & \textbf{0.99} & \textbf{1.0} & .80 & .15 & 1.7h & .81 & 5.6s & \textbf{1.0} & 2.9h & .89 \\
G & $\approx$ 2.623m & 22 & \textbf{17.56s} & \textbf{1.0} & 51.3s & .00$^*$ & 124h & 1.0$^*$ & 15h & 1.0$^*$ & 133.2s & \textbf{1.0} \\
H & 600 & 3.3K & \textbf{167} & \textbf{1.0} & 6.47s & .98 & 51m & \textbf{1.0} & 20m & \textbf{1.0} & 149s & \textbf{1.0} \\
I & 176 & 15 & 38.0 & \textbf{1.0} & \textbf{1.43} & \textbf{1.0} & 166 & \textbf{1.0} & 259 & \textbf{1.0} & 44.0 & \textbf{1.0} \\
J & 384 & 28 & \textbf{122} & \textbf{1.0} & 8.51 & .00 & 1.42s & \textbf{1.0} & 23s & .02 & 175 & \textbf{1.0} \\
\midrule
 & $\approx$ 2.625m & 33 & \textbf{27.67s} & \textbf{1.0} & 35h & .00$^*$ & 215h & 1.0$^*$ & 143h & 1.0$^*$ & 13h & 1.0$^*$ \\
\bottomrule
\end{tabular}
\caption{Comparison of unknot recognition methods across ten test sets (described on page~\pageref{test sets}; total: 2,624,834 diagrams). The ``cr.''\ column gives the average crossing number. Each entry shows total running time followed by effectiveness (fraction of unknots identified). Times are in milliseconds unless marked with ``s'', ``m'', or ``h'' (for ``seconds,'' ``minutes,'' or ``hours,'' respectively). Effectiveness \textbf{1.0} = all unknots recognized; $1.0^*$ rounds to~1.0; $.00^*$ rounds to~0. A dagger indicates that on all samples \Regina or \Snappy exceeded time or memory constraints before terminating. Boldface marks the fastest method achieving perfect effectiveness.}
\label{tab:benchmarks}
\end{table}

\Reapr is the only method to achieve perfect effectiveness on every test set, processing all $\approx 2.6$ million unknots in about 28 seconds of total CPU time. This is perhaps surprising: non-diagrammatic methods based on triangulation simplification~\cite{Regina} or fundamental group computation~\cite{SnapPy} are widely regarded as more powerful than any diagrammatic approach, and the implementations in \Regina and \SnapPy are highly optimized. By contrast, \Reapr's underpinnings --- pass moves, invented by Tait in the 19th century, graph drawing algorithms, and a minimum cost flow computation --- are much more elementary. Nevertheless, it is competitive or faster on every test set.

The non-diagrammatic methods are indeed very strong. The \SnapPy fundamental group simplifier beat \Reapr on test sets~D and~E of ``classical hard unknots'' and scored perfectly on every test set except~F. The \Regina triangulation simplifier was also generally effective, though there were $4,\!233$ pass-reduced examples in~G where it either timed out at 100 seconds or exceeded $4$~GB of working memory. (These timeouts consumed about $95\%$ of the runtime for test set~G.) The two diagrammatic methods in \Regina and \SnapPy were less effective: \Regina's \texttt{Link.simplify()} made no progress on test sets~G and~J (though it is extremely fast on small diagrams), while \SnapPy's diagrammatic method failed on the larger pass-reduced examples in~J.

Aside from \Reapr, all of these methods scale poorly to large diagrams. Both the \Regina triangulation simplifier and the \SnapPy diagrammatic method ran out of either time or memory on all members of test set~A (200 lattice unknots, average $\approx 111$K crossings); based on spot checks, we expect both of these methods would have eventually succeeded, but we project they would have taken $\approx 0.7$ years and $\approx 1$ year of CPU time, respectively. \Regina's diagrammatic simplifier and \Snappy's fundamental group calculator were reasonably effective on test set A, but both took many hours. By contrast, \Reapr processes the same set in under 10 seconds. Scaling to large diagrams is a significant practical consideration: in~\cite{RandomKnots}, we needed to simplify diagrams with up to 35 million crossings.

The most striking result is test set~F, the Kauffman challenge unknots~\cite{kauffmanKnotInvariantsKnot2024}. These rational tangle unknots are the only test set where the non-diagrammatic methods struggle: even the \SnapPy fundamental group simplifier fails on some examples, and both non-diagrammatic methods show unstable performance, sometimes recognizing a diagram quickly in one run and timing out at 3000 seconds on the next. The more complicated knots in~F seem to be reliably difficult for these methods. Yet the same examples are straightforward for pass-move simplifiers: \Snappy  \texttt{\textquotesingle global\textquotesingle} solves them in 5.6 seconds and \Reapr solves them in under 1 millisecond.

There are many other simplification strategies described in the literature.
Petronio and Zanellati~\cite{PZmoves} use nonlocal moves which include pass moves and generalized versions of Reidemeister I and II moves which avoid other arcs crossing over or under a region to be simplified, as well as moves dividing diagrams into connect sums. Andreeva et al.~\cite{andreevaMathematicalWebserviceRecognizing2002} used an arc presentation to construct an interactive simplification applet in Java.  Balch~\cite{BalchPCB} used a combination of pass moves and some ``hard'' moves roughly similar to the Petronio–Zanellati moves. We did not test these methods because none of their implementations seem to be actively maintained.

Another very different approach uses the flow of a repulsive energy to unknot curves.  These methods start with a geometric realization of the knot as a curve in space and use numerical optimization methods to try to minimize the energy. Here the state of the art is Noma~et~al.~\cite{nomaMedialSpherePreconditioning2025} which requires several seconds to untangle the 32 crossing Freedman–He–Wang unknot, faster than the method proposed by one of us (Schumacher)~\cite{Yu2021}, but extremely slow compared to the tests above. These methods are often defeated by test set~F (the Kauffman challenge unknots~\cite{kauffmanKnotInvariantsKnot2024}). We did not test them here.

\section{Conclusions and Future Directions}

The most pressing mathematical question about \Reapr is the simplest: why does it work? Petronio and Zanellati~\cite{PZmoves} noticed that even pass-reduced knot diagrams may yet contain crossing-number reducing passes which reroute ``mid-level'' arcs of a knot diagram, as in~\cref{fig:middle move}. \Reapr can certainly discover such moves by providing a new projection in which mid-level arcs become over- or underpasses. Preliminary experiments suggest that this accounts for most, but not all, of \Reapr's simplifications: there appear to be cases requiring coordinated rearrangement of larger portions of the diagram. While \Reapr appears to appeal to 3d information about a lattice embedding, this embedding is computed from the diagram --- in particular, the minimum total variation is a combinatorial invariant of the diagram (\cref{prop:virtualization}).

\begin{figure}[t]
\centering
\hfill
\includegraphics[width=0.17\textwidth]{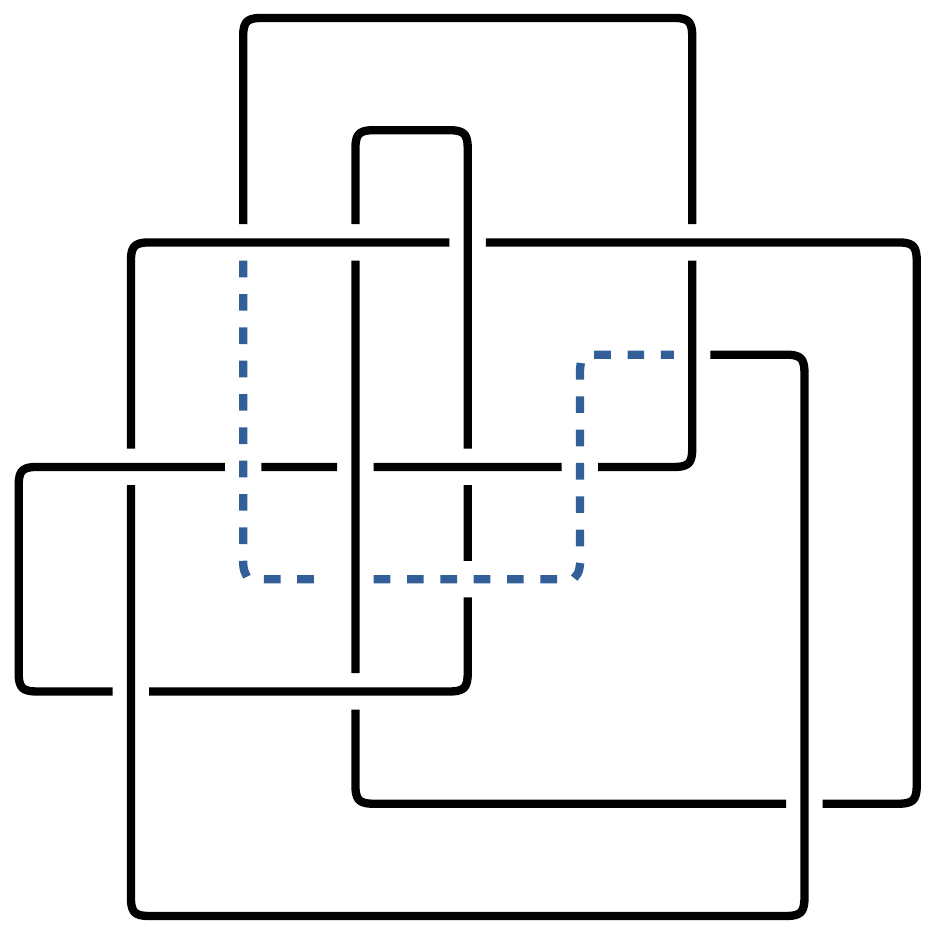}
\hfill
\includegraphics[width=0.17\textwidth]{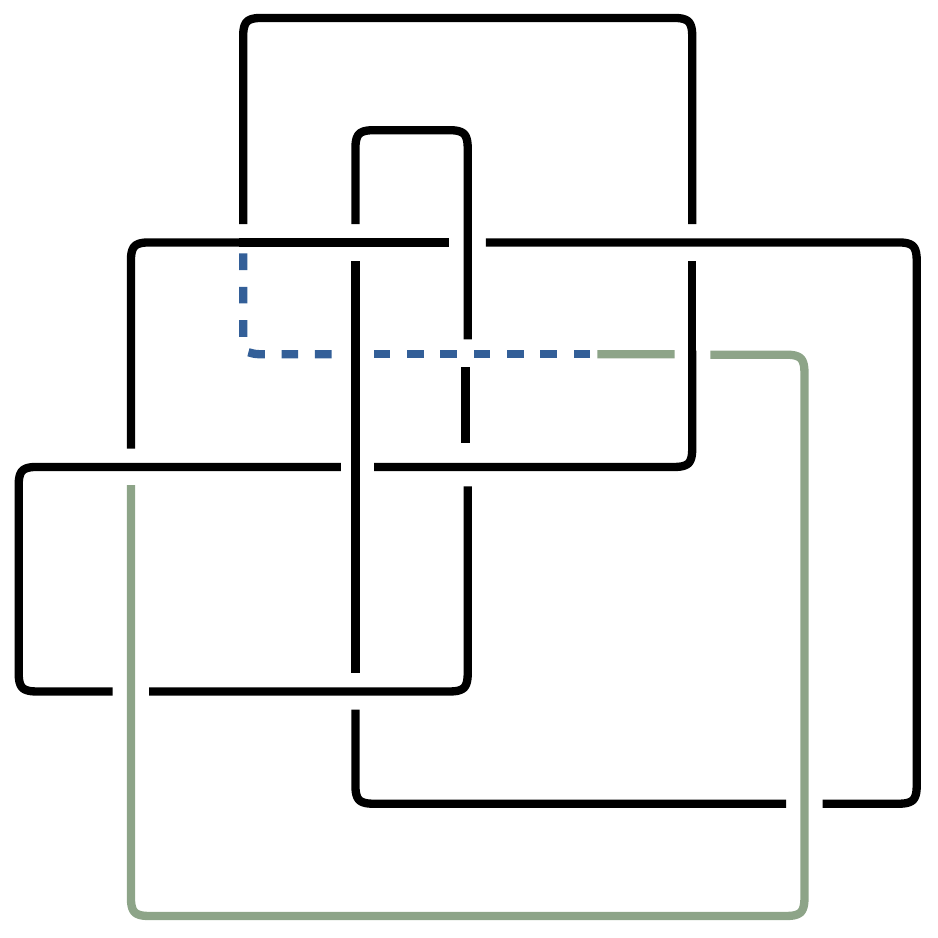}
\hfill
\includegraphics[width=0.17\textwidth]{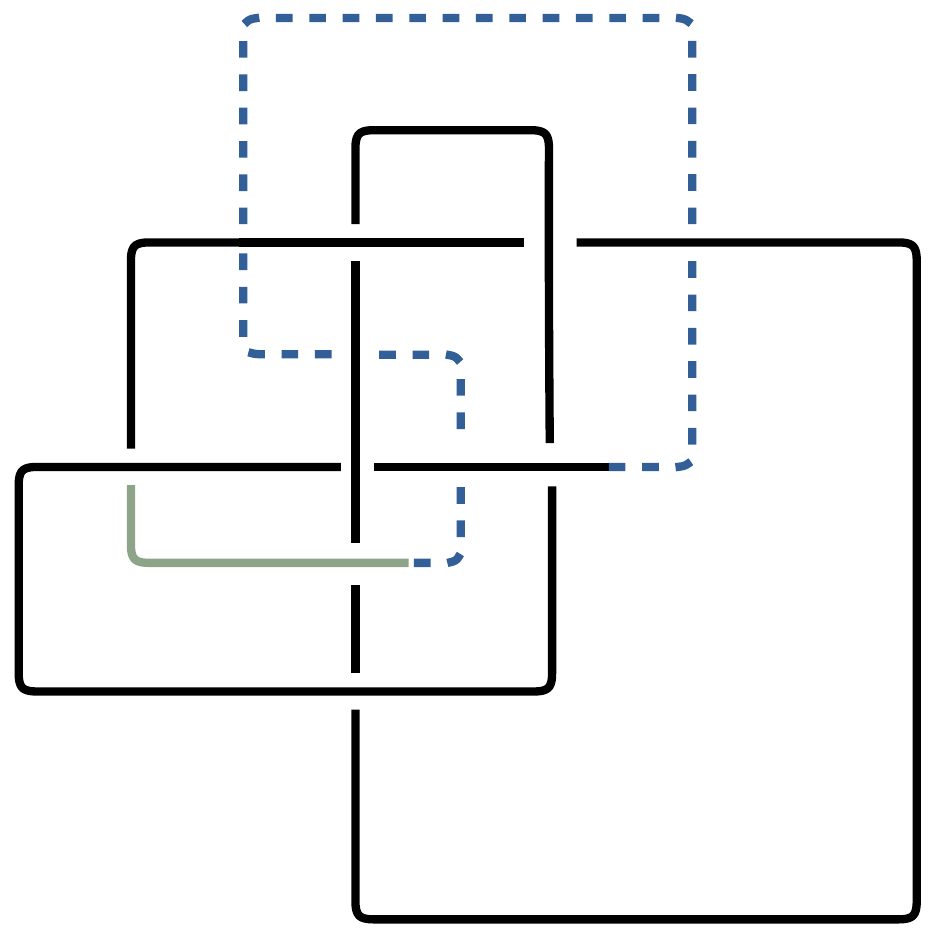}
\hfill
\includegraphics[width=0.17\textwidth]{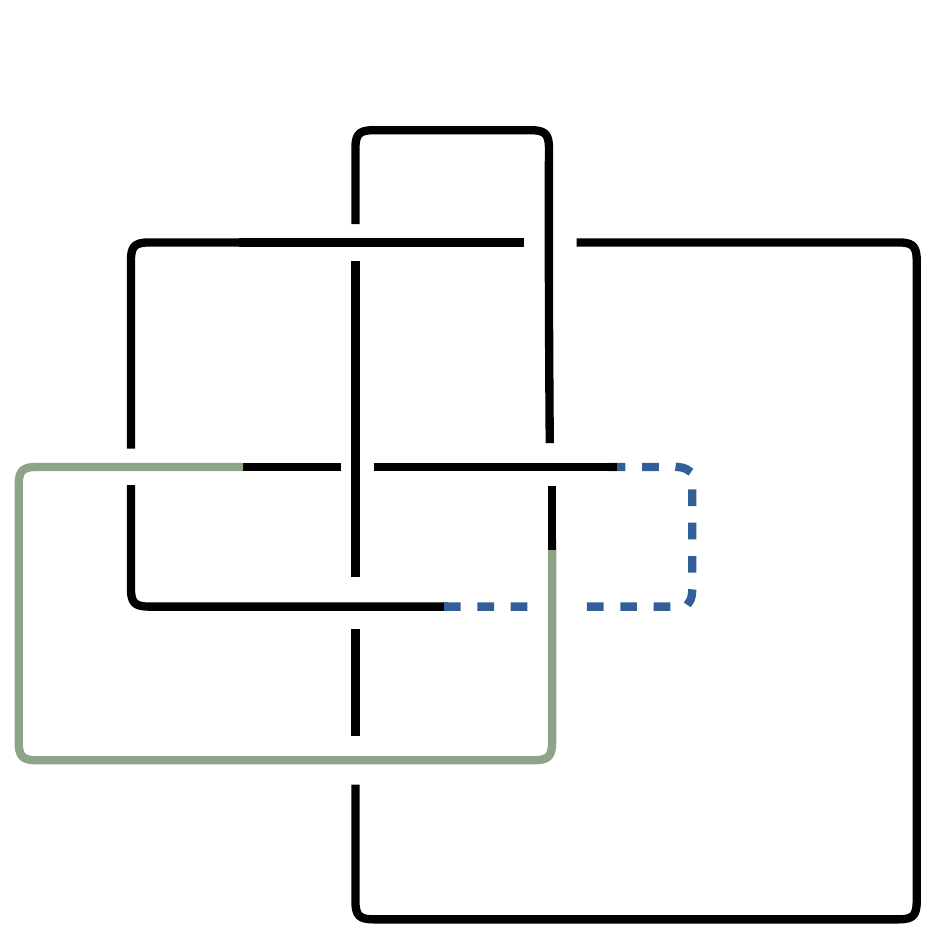}
\hfill
\includegraphics[width=0.17\textwidth]{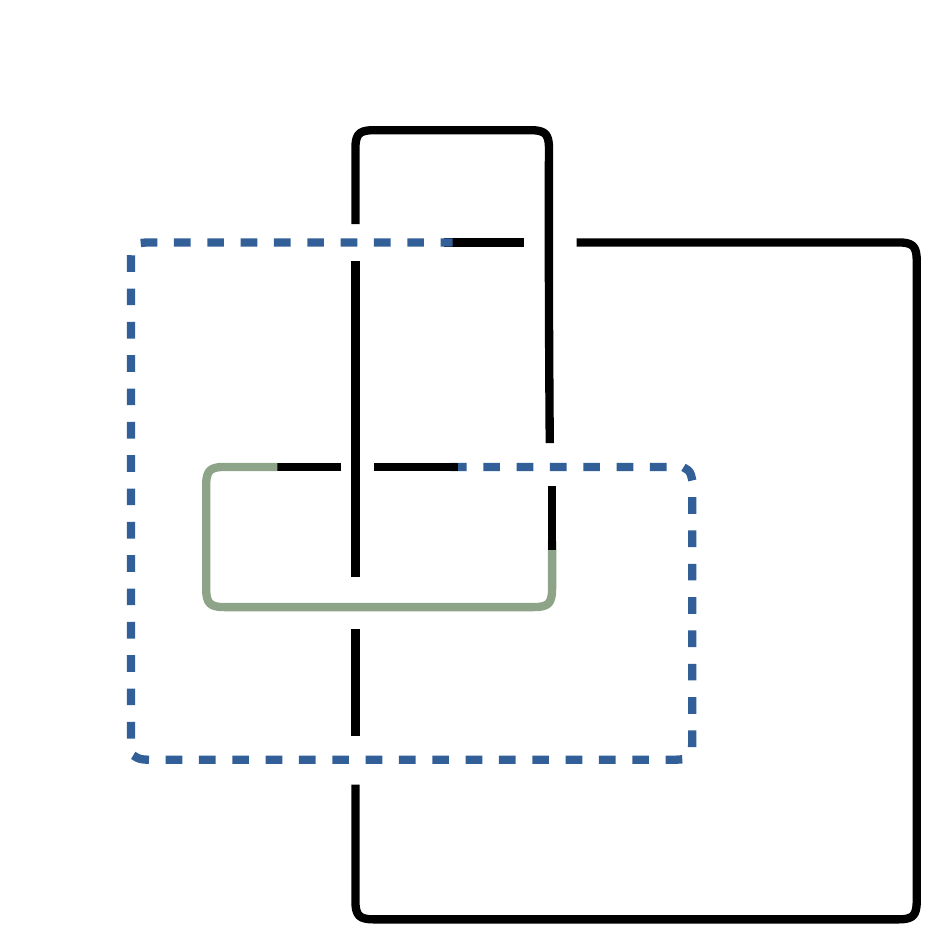}
\hfill
\hphantom{.}
\caption{An illustration of ``mid-level'' crossing-number reducing pass moves in the 146th pass-reduced unknot discovered by Tuzun and Sikora~\cite{TuzunSikoraJones}. The first two moves are of this type; after that, the diagram is reduced by conventional pass moves (the last few of which are left to the reader). \Reapr discovers mid-level pass moves empirically by providing a ``side view'' into the knot diagram.
\label{fig:middle move}}
\end{figure}

The effectiveness of pass moves at making large-scale changes to diagrams suggests that adding random crossing-number preserving pass moves might be a low-cost way to significantly improve the performance of diagram simplifiers (including \Knoodle itself!). It also remains to be understood why the Kauffman challenge unknots are so difficult for the non-diagrammatic methods; perhaps there is something about the group presentations generated from these rational tangle diagrams which makes simplification particularly hard.

One important design goal of \Reapr was to be as fast and efficient as possible. As a result, \Reapr may be a useful tool in the training pipeline for AI methods in knot theory. For example, Applebaum et al.~\cite{applebaumUnknottingNumberHard2025} conclude their paper with an example of a 42-crossing hard unknot which resisted $10^4$ attempts to simplify it using \Snappy's \texttt{\textquotesingle global\textquotesingle} heuristic; \Reapr simplifies this example in 0.36~ms on a 2025 laptop. %

\section*{Acknowledgments}
We are very grateful to our colleagues and friends for many helpful discussions about knots and unknots and their generous sharing of data, including Ronaldo Oliveira, Yo'av Rieck, Adam Sikora, and Lou Kauffman. Some of this work was done as part of the AIM SQuaRE ``Landscapes of Knots'', and we are grateful to the American Institute of Mathematics for their generous support. 
This work was partially supported by the National Science Foundation (DMS--2107700).
H.S.{} gratefully acknowledges support through the research training group ``Energy, Entropy, and Dissipative Dynamics'', funded by Deutsche Forschungsgemeinschaft, project no. 320021702/GRK2326.

\bibliography{extra-custom-citations,zotero-library-current}

\appendix

\section{Pass Rerouting Details}
\label{sec:rerouting details}

\subsection{Algorithmic Details}
\label{sub:AlgorithmicDetails}

We provide here more details on the pass rerouting algorithm described in~\cref{sec:pass moves}. Pass rerouting consists of three steps:
\begin{enumerate}
    \item Finding a (maximal) over- or underpass.
    \item Find a good target path to reroute to.
    \item Performing the actual rerouting on the diagram.
\end{enumerate}

We now explain each in some detail.

\subsubsection{Finding a Maximal Pass}
\label{sec:FindingPasses}

To find an over-/underpass in the link diagram $D$, we start at some active arc $a_0$.
If its head goes over/under, we start searching an over-/underpass.
For simplicity, let us assume that we are dealing with an overpass. (Underpasses are processed analogously.)

If its tail goes under, then we can be sure that this arc is the start of a maximal overpass.
Otherwise, we are somewhere in the middle of a maximal overpass; then we walk backwards to its start.
Once the first arc $a_1$ of the maximal overpass is found, we walk along the link, traversing arcs $a_1,a_2,\dotsc$ until we visit an arc $a_k$ whose head goes under.
Then $(a_1,\dotsc,a_k)$ is a (maximal) overpass.
For later use we mark these arcs and all their crossings $c_0,\dotsc,c_k$.
Since we have to reroute many passes, we do this in a way that allows us to unmark all arcs and crossings in an instant:
For each crossing and each arc we set aside an integer for its mark and initialize these integers to~$0$.
Whenever we start processing a new pass, we first increment an internal counter; 
as soon as we visit crossing $c_i$ and arc $a_i$, we set their marks to the current value of this pass counter.
This identifies them as members of this pass. (This way we also do not have to store the tuples $(a_1,\dotsc,a_k)$ and $(c_0,\dotsc,c_k)$ explicitly.)
As soon as we start processing another pass, we increment the pass counter again, immediately rendering the marks on all arcs and crossings stale. 
So, in this sense, unmarking arcs and crossings costs $O(1)$.

\subsubsection{Finding Shortest Paths}

Let $D$ be our current link diagram.
To find a good target path to reroute to, we consider the new diagram $\hat{D}$ obtained from $D$ by removing the marked arcs.
Moreover, we let $\hat{D}^*$  be the \emph{dual diagram} of $\hat{D}$:
Its vertices are the faces of $\hat{D}$; its arcs correspond to the arcs of $\hat{D}$; and its faces correspond the crossings of $\hat{D}$ (but the latter do not really matter here).

Let $(a_1,\dotsc,a_k)$ be the pass we want to reroute.
The two faces of $a_1$ in $D$ collapse to one face $f_1$ in $\hat{D}$, which gives rise to a vertex in $\hat{D}^*$.
Likewise, the two faces of $a_k$ collapse to a face $f_k$ in $\hat{D}$ and thus to a vertex in $\hat{D}^*$.
Our goal is to find a shortest path of dual arcs $b_1^*,b_2^*,\dotsc,b_\ell^*$ in $\hat{D}^*$ from $f_1$ to $f_k$. 
Each of the dual arcs $b_i^*$ means that we have to cross their corresponding primal arc $b_i$ in $\hat{D}$, creating a new crossing. 
Such a rerouting allows us to replace the $k-1$ crossings $c_1,\dotsc,c_{k-1}$ of the overpass by just $\ell$ crossings, provided that this shortest path consists of $\ell <  k - 1$ dual arcs. 
Hence, minimizing the path length in $\hat{D}^*$ means minimizing the number of crossings after rerouting,
so what we are looking for is a shortest path in $\hat{D}^*$.

Creating a new knot diagram for $\hat{D}$ and creating a new dual diagram $\hat{D}^*$ costs $O(n)$. As explained in~\cref{sec:pass moves},
we will instead navigate in $\hat{D}^*$ using the operators $\Left(\cdot)$ and $\Reverse(\cdot)$ on darcs. 

The key difference in navigating on $\hat{D}^*$ rather than $D^*$ is in modifying the $\Left(\cdot)$ operator to ignore deleted arcs, which we do using the marks on the primal arcs.
Specifically, the only difference from traversing $D^*$ is in what we do when we happen to visit a darc $da$ whose underlying arc is marked as member of the current overpass.
In this case, we simply ignore $da$ and move on to  $da \leftarrow \Left (\Reverse(da))$.
This requires us to maintain a relationship between 
an arc $a$ and its forward and backward darcs $da_{\mathrm{forward}}$ and $da_{\mathrm{backward}}$.
For example, one can set $da_{\mathrm{forward}} = 2 \, a + 1$ and $da_{\mathrm{backward}} = 2 \, a + 0$.
This way, finding the arc underlying a darc amounts to a bit shift; the direction can be inferred by reading off the least significant bit;
and computing $\Reverse(da)$ amounts to flipping the least significant bit of the integer representing $da$. 
So the mapping between arcs and darcs can be done entirely without memory lookups.

The remainder of the pathfinding algorithm can be summarized as bidirectional Dijkstra search in $\hat{D}^*$ with unit edge weights on dual arcs.
As the implementation details matter, we provide at least the most important ones.
The idea of the bidirectional Dijkstra search is to grow two balls around the two starting vertices $f_1$ and $f_k$ in $\hat{D}^*$. Let us call these balls $A$ and $B$. We start with radii $r_A = 0$ and $r_B = 0$. 
Then one ball, say $A$, is selected, and its radius is increased by one unit. 
This is done by visiting the \emph{boundary} of $A$ and then visiting the neighbors of the boundary elements from there.
If such a neighbor is already in $A$, we do nothing. 
If it is not in $A$ we add it to $A$---unless it is already a member of the other ball $B$. 
In this case, the two balls $A$ and $B$ touch for the first time; then our search has come to an end.
A shortest path can be reconstructed by maintaining for each visited vertex the vertex from which it was first visited.

Typically, this algorithm is described by focusing on the vertices.
Since the vertices of interest are faces of the modified diagram $\hat{D}$ in our case, and since we do not even want to assign a name to them, we reformulate the usual bidirectional algorithm focusing on directed arcs of $\hat{D}$, which we identify with the subset of those darcs of $D$ whose underlying arc is unmarked.
For this we use two stacks $A_{\mathrm{front}}$ and $B_{\mathrm{front}}$ to maintain the lists of those darcs $da$ whose duals ``stick out''  of $A$ and $B$ respectively.
More precisely, a darc $da$ is a member of $A_{\mathrm{front}}$ if and only if its right face ($=$ the tail of $da^*$) is a member of $A$ and if its left face ($=$ the tip of $da^*$) belongs to neither $A$ nor $B$.
Moreover, we store the following information for each arc $e$ of~$D$:
\begin{itemize}
    \item whether $e$ has already been visited in the bidirectional search for the current overpass; this is done by storing the current value of the overpass counter;
    \item the arc underlying the darc that lead to $e$ being visited;
    \item whether $e$ was visited while growing $A$ or while growing $B$;
    \item the direction in which $e$ was crossed upon that visit (``left-to-right'' or ``right-to-left'' with regard to the forward arc $de_{\mathrm{forward}}$).
\end{itemize}
The first piece of information (the mark) allows us instant reset of information by the mechanism described above. 
The other data is used for the actual search and for the reconstruction of the optimal path through backtracking.
At start-up of the search, the stacks $A_{\mathrm{front}}$ and $B_{\mathrm{front}}$ are empty and all arcs are implicitly marked as unvisited because their marks are stale.
Then we cycle through the darcs of the faces $f_0$ and $f_k$ (in the diagram $\hat{D}$) by the mechanism described above.
For each darc $da$ visited this way, we put the data described above onto the underlying arc $a$, and push its \emph{reversal} $\Reverse(da)$ to $A_{\mathrm{front}}$ and $B_{\mathrm{front}}$, respectively.
Now the darcs on the stacks point to the faces we have to visit next for growing the balls $A$ and $B$, i.e., these faces are to the left of these darcs.

Next we describe how to grow one of the balls, say $A$.
First we provide an empty stack $F$ to hold the new frontier of $A$.
Then we pop a darc $da$ from the stack $A_{\mathrm{front}}$.
Let us temporarily denote the face that lies to its left by $f$ and the underlying  arc by $a$.
Now we use the cycling method described above to visit all the other darcs of the face $f$. 
For each darc $de$ visited this way,
we check whether and how the underlying arc $e$ has been visited already:
\begin{itemize}
    \item If $e$ has not been visited from either $A$ or $B$, we push $\Reverse(de)$ to the stack $F$ 
    and store the following information in $e$: 
    \begin{itemize}
        \item the current value of the overpass counter (to remember that we have visited $e$); 
        \item that $e$ was visited from $a$; 
        \item that $e$ was visited while growing $A$; and
        \item the direction in which $e$ was crossed just now: 
        ``left-to-right'' if $de = 2\, e +1$ or ``right-to-left'' if $de = 2\,e +0$.\footnote{Note that this flag has to be assigned differently when we are about to grow $B$ because the according part of the path will go backwards: here $de = 2\, e + 0$ means ``left-to-right'' and  $de = 2\, e + 1$ means ``right-to-left''. }
    \end{itemize}    
    \item If $e$ has been visited while growing $A$ in the same direction, then we stop cycling around~$f$ because we know that all other arcs have been visited, too.
    \item If $e$ has been visited while growing $A$, but in the opposite direction, then we know that the face on the other side has been visited already. Hence, we do nothing and continue cycling over the darcs of $f$.
    \item If $e$ has been visited while growing $B$, then the balls $A$ and $B$ just touch for the first time; we stop growing $A$ and $B$ immediately and move on to reconstructing the shortest path via backtracking.
\end{itemize}

We do this for all darcs on stack $A_{\mathrm{front}}$ until the latter is empty. Then we swap $A_{\mathrm{front}}$ with $F$ so that $A_{\mathrm{front}}$ now represents the new frontier. (The stack $F$ can be reused for the next frontier.)

All that is needed now is a strategy for deciding whether to grow $A$ or $B$.
Each such strategy leads to a different variant:
always growing $A$ leads to the original Dijkstra algorithm (in the special case of unit edge weights), also known as \emph{unidirectional breadth-first search};
growing $A$ and $B$ in alternating fashion would resemble what is often  referred to as \emph{bidirectional breadth-first search}.
The bidirectional search is more efficient because the  main cost of growing a ball in a graph is to visit all the edges on the boundary.

However, as described in~\cref{sec:pass moves}, we found in practice that another strategy works better: always grow the ball with the smaller frontier (or just the first one if the two frontiers have the same size). 
The size of the frontier is some indicator on how many edges are to be visited in the next sweep.\footnote{The exact number depends on how many of the frontier's neighbor faces are unvisited and on the size of these unvisited faces. Alas, this information is not immediately available at this moment; so we have to use the size of the frontier as a proxy for this.}
So, this is a greedy strategy to minimize the total number of edges ($=$ dual arcs) visited.

\subsubsection{Rerouting}

This rerouting is somewhat tedious:
The interior crossings $c_1,\dotsc,c_{k-1}$ of the pass have to be disconnected from the arcs that do not belong to the pass.
For each crossing disconnected this way, one of the incident arcs has to be deleted, and another has to be reconnected to where the other arc was connected---unless these arcs coincide; then the arc is deleted and an unlink is recorded. (This can happen only in diagrams for multiple-component links.)
Let $b_1,\dotsc,b_\ell$ be the arcs crossed by the shortest path from $f_1$ to $f_k$. 
These arcs need to be split and a new crossing and two new arcs have to be sewn in appropriately. 
For the new crossings we can reuse the memory of crossings $c_1,\dotsc, c_\ell$ because we have the guarantee that $\ell < k - 1$; the remaining crossings are ``deleted'' by setting their state flags to ``inactive'' (see \cref{sec:DataLayout} for details).
Similarly, we can reuse the memory of the deleted arcs for the newly created arcs. 

This all is straightforward, provided that neither the pass nor the target path contain any loops (except maybe $c_0 = c_k$).
The target path cannot contain any loops because we chose it as a shortest path.
But the pass may contain loops, in principle.
Preventing this was the reason we put marks on the crossings during detection of passes (see \cref{sec:FindingPasses}): once we revisit an already marked crossing, 
we know that the current strand forms a loop. 
We immediately stop expanding the pass and remove the loop by the surgery described above.

\subsection{Implementation Details}
\label{sec:implementation}
\subsubsection{Data Layout}
\label{sec:DataLayout}

Memory layout is crucial for the performance of our algorithm.
The rerouting algorithm frequently alters the diagram. So it is tempting to design the data structure as a~linked list: a crossing could be represented by a quadruple of pointers to the incident arcs; an arc could be represented by a pair of pointers to the crossings; and instances of crossings and arcs could be created (i.e., allocated) and deleted (i.e., deallocated) on the fly.
This provides maximal flexibility, but it gives little control on where each crossing or arc is located in memory (which might lead to many expensive cache misses); and the overhead of frequent (de-)allocation may be considerable.

This is why we choose a different layout for the class \texttt{PlanarDiagram} in \emph{Knoodle}.
Rerouting always reduces the number of crossings and the number of arcs. 
Hence, we decided to make deletion fast and to entirely neglect creation of crossings and arcs.
We store all $n$ crossings in a $3$-dimensional heap-allocated array \texttt{\CA} (read: \underline{c}rossings to \underline{a}rcs) of size $n \times 2 \times 2$ so that the index of an arc adjacent to crossing \texttt{c} can be accessed via \texttt{\CA[c][i][j]}, where \texttt{i} can be \texttt{0} (outgoing) or \texttt{1} (incoming), and \texttt{j} can be \texttt{0} (left) or \texttt{1} (right).
In the layout convention of the \emph{C}~language this means that 
\texttt{\CA[c][0][0]}, \texttt{\CA[c][0][1]}, \texttt{\CA[c][1][0]}, \texttt{\CA[c][1][1]}
are stored consecutively (and in this order).
Likewise, we store the $m = 2 \, n$ arcs in a $2$-dimension array \texttt{\AC} (read: \underline{a}rcs to \underline{c}rossings) of size $m \times 2$ so that the indices of two crossing of arc \texttt{a} are located at 
\texttt{\AC[a][0]} (tail) and \texttt{\AC[a][1]} (tip).
Since $4 = 2 \times 2$ and $2$ are powers of $2$, this allows the compiler to use fused add-with-shift instructions for indexing.
Formally, we may treat \texttt{\CA[c]} as a $2 \times 2$-matrix and \texttt{\AC[a]} as a $2$-vector.

Additionally, we store an $n$-vector of crossing state flags and an $m$-vector of arc state flags. 
The crossing state flags can take three states: ``left-handed'', ``right-handed'' or ``inactive''.
The arc state flags can take only the values ``active'' or ``inactive''.
At initialization, all crossing state flags are set to ``left-handed'' or ``right-handed'' according to the respective crossing's handedness, and all arc state flags are set to ``active''.
Deletion of a crossing or an arc just amounts to setting the respective flag to ``inactive''; the corresponding entries from the arrays \texttt{\CA} and \texttt{\AC} will be neglected from then on.
These flags fit into a single byte. 
Storing them along with the $2 \times 2$-matrices and the $2$-vectors for crossings/arcs would ruin their nice memory alignment. 
Moreover, these flags serve as guards against loading more information:
contemporary computer architectures always fetch contiguous blocks of bytes, the so-called \emph{cache lines} at once.
On the one hand, this allows to load and check many of these state flags before even a single arc or crossing has to be fetched at all.
On the other hand, we can rely on the integrity of the data structure to navigate the active part of the diagram:
if arc \texttt{a} is active, then the crossings \texttt{\AC[a][0]} and \texttt{\AC[a][1]} must be active, too, and we can jump to them without fetching and checking their state flags; and similarly for active crossings. Hence, it is advantageous to store the connectivity information and the flags separately. As the number of active crossings is crucial information, we maintain a counter for it that we decrement whenever we delete a crossing.

\subsubsection{Ordering}
The ordering in which we label  the crossings and arcs and store them in \texttt{\CA} and \texttt{\AC} matters a lot; a bad ordering will incur a cache miss on always every memory access. 
A good one can avoid many of these misses, at least for a few frequently needed access patterns. 
Our two most important access patterns are: 
\begin{enumerate}[label={(\roman*)}]
    \item \label{item:CanonicalAccess}
    moving from one arc to its head crossing and then straight through the crossing to the opposite arc and so on; this is an access pattern very natural to knot theorists because it means ``walking along the knot''.
    \item \label{item:FaceCycleAccess}
    moving from one arc to its head or tail crossings and then turning \emph{left}; we use this mode very frequent, e.g., to cycle around (connected components of) the boundary of a face and to navigate the dual graph during the Dijkstra search.
\end{enumerate}
The default ordering we use is the ``natural'' ordering of a knot: arcs are numbered by the ``time'' of visit; crossings are ordered by the ``time'' of first visit. For multiple-component links we first traverse the link component of the first arc found in this order; then we move on to the link component of the next unvisited arc and so on. 
This is the perfect ordering for access pattern~\ref{item:CanonicalAccess}.
We call this the \emph{canonical ordering}. 
Our experiments show that this ordering is surprisingly good also for access pattern~\ref{item:FaceCycleAccess}:
for large knot diagrams (e.g., obtained from random equilateral megagons), initializing with the canonical ordering instead of a random ordering can easily lead to a speedup of factor $5$ or more for the rerouting code.

Of course, surgery and crossing/arc deletion change the ordering. Over time, this gradually reduces the cache-friendliness of our memory layout. 
We found that it is advantageous to \emph{compress} our data structure from time to time, i.e., to reinitialize it with the canonical ordering, also removing all gaps that emerged from deletion of arcs and crossings.
This requires traversal of the whole diagram, so it is expensive; but this cost amortizes quickly. 
To decide when to recompress, at a few decisive locations in our simplification pipeline we check whether more than half of the crossings are inactive; if they are, then we recompress.
Adding this recompression strategy leads to a further speedup factor of about $1.2$ for large diagrams. And for small diagrams it seems to do no harm.

\subsubsection{The order in which over- and underpasses are processed}

We process over- and underpasses in ``rounds'', interleaved with compression steps.
At the start of each round, we remember the initial value of the pass counter.
Then we look for the first active arc.
If its head goes over/under, we start growing an over-/underpass.
If its tail goes under/over in the opposite way, then we can be sure that this arc is the start of a maximal over-/underpass.
Otherwise, we are somewhere in the middle of a maximal over-/underpass and walk backwards to its start.
Once an over-/underpass has been processed (by rerouting it or by deciding that rerouting is not salient), we immediately start a new under-/overpass at the last arc of the processed pass.
That is, we walk backwards to find the start of a maximal pass containing this arc.\footnote{By walking backwards we make it likely that the accessed memory is still ``warm'', i.e., it still resides in a fast processor cache.}

Sometimes, we finish processing a pass whose last arc does not exist (because the whole pass was a loop) or whose last arc's follow-up arc has been recently rerouted.
In that case, we instead look for the next active arc that has not been rerouted recently and continue the pass search there. 
If all arcs have been ``touched'' this way, we end the round, check to see if we should recompress it, and start a new round of rerouting. If a full round has been completed without any changes in the diagram, then we are done and stop.

\subsubsection{Winged Half-edges}
Two-dimensional diagrams are often encoded as winged half-edge data structures. 
For our use case, planar diagrams for knots and links, this would amount to storing for each darc $da$:
its head crossing, its face to the left, the next left darc $\Left(da)$, and the darc $\Right(\Reverse(da))$ to the right of $\Reverse(da)$.
(Moreover, one would select and store one of the adjacent darcs per crossing and per face.)
Albeit this data structure is very useful for many mesh processing tasks (e.g., for mesh subdivision and remeshing), we found it not so well-suited for the kind of surgery we have to do in the rerouting phase: faces are split and merged quite frequently, so it is cumbersome to keep all darcs up to date.\footnote{This is why we decided to not track face information explicitly.}
Moreover, for the Dijkstra search on $\hat{D}^*$ we need neither know what the crossings are nor what $\Right(\Reverse(da))$ is.
All we need to know is $\Left(da)$; 
carrying along any further data, loading it into cache, and maintaining it would just slow us down.

We could compute $\Left(da)$ on the fly by looking up the head crossing of $da$ in \texttt{\AC} first and then looking up the left darc in \texttt{\CA}.
However, these are two indirections, meaning two opportunities for cache misses.
We can reduce this to a single lookup by precomputing an array \texttt{\AleftA} of size $2 \, m = 4 \, n$ whose $da$-th entry is $\Left(da)$, and by indexing into that.
The lookups in 
this array
profit from a good ordering of arcs (which implies a similar ordering for darcs). 
This way, ordering of the crossings becomes irrelevant for the shortest path search.

The array \texttt{\AleftA} can be computed quickly by cycling over the active \emph{crossings} of the diagram: each crossing can tell the four incoming darcs who their next left darc is (it's the reverses of the incoming darcs). 
Moreover, \texttt{\AleftA} is easy to update when any surgery is done. 
So 
this array
needs to be computed only once for all pass moves (or when a recompression happens).

Since the shortest path search is memory bound, we indeed exhibit speedups close to a factor of~$2$ in practice (e.g., factor of about $1.8$ for diagrams obtained from megagons).

\subsubsection{Arrays vs.\ Hash Maps}

Markers and backtracking information on arcs for the shortest path search can be stored in various ways.
As we said before, we just used ordinary, contiguous, heap-allocated arrays of size $m$ for this and uses a time stamp (the current value of the overpass count) for fast reset.

Another popular way to store information assigned to arcs is  to use associative containers keyed on the arcs' labels.
There are various implementations for such data structures. 
One way is to use a heap, i.e., a binary sort tree into which the key-value pairs are inserted as nodes. 
However, node insertion and node lookup cost $O(\log(k))$ on average, where $k$ is the size of the heap. 
Moreover, depending on the implementation, deletion of the heap might cost $O(k)$ if the nodes need to be deallocated one by one. 
This is why we did not consider this data structure at all.

Another class of associative containers are hash maps; these offer insertions and lookups with cost $O(1)$.
We tried the following C++ implementations: 
\texttt{std::unordered\textunderscore{}map} 
from the \emph{C++ standard library}; \texttt{boost::unordered\textunderscore{}map}, and \texttt{boost::unordered\textunderscore{}flat\textunderscore{}map} 
of the \emph{boost} library \cite{boost}.
Of all these associative containers \texttt{boost::unordered\textunderscore{}flat\textunderscore{}map} performed best. 
However, the runtime for the pass rerouting was still three times as long as with our array implementation.
This might come as a surprise at first:
one might think that using an associative container is advantageous because the number $k$ of visited arcs during the shortest path search should be relatively small compared to the number $m$ of arcs. 
Alas, each of these containers have a cost of $O(k)$ for reset. 
And while insertions and lookups have constant cost, the cost for hashing cannot be neglected.
Moreover, the very nature of hash maps may lead to a suboptimal memory access pattern.
In contrast, our array-based implementation may profit from cache-friendly orderings of the arcs (and from repeated recompression).

\subsubsection{Flagging Proven Minimal Diagrams}

Once we have run the routine that detects and disconnects connected summands, we know that the diagram (and all its diagram components) are \emph{reduced}: they do not contain any loops or isthmii. Afterwards, the routine that splits the diagram into separate instances of \texttt{PlanarDiagram} can check if the newly created diagrams are alternating.
If yes, then the new diagram is minimal~\cite{kauffmanStateModelsJones1987,MurasugiJonesClassical,ThistlethwaiteSpanningTree}, i.e., its number of crossings coincides with the minimal crossing number of the underlying link class. In this case we set a flag in \texttt{PlanarDiagram} that excludes this diagram from any further rerouting attempts.

\end{document}